\documentclass[12pt]{amsart}

\usepackage{graphicx}
\usepackage{amsfonts}
\usepackage{epsf}
\usepackage{amssymb}
\usepackage{amsmath}
\usepackage{amscd}
\usepackage{hyperref}

\newtheorem{theorem}{Theorem}[section]
\newtheorem{proposition}[theorem]{Proposition}
\newtheorem{lemma}[theorem]{Lemma}

\newtheorem{corollary}[theorem]{Corollary}

\theoremstyle{remark} 
\newtheorem{definition}[theorem]{Definition}
\newtheorem{example}[theorem]{Example}
\newtheorem{remark}[theorem]{Remark}

\newcommand{\q}{\mathbb Q}
\newcommand{\z}{\mathbb Z}
\newcommand{\f}{\mathbb{F}}
\newcommand{\lr}{\langle \cdot , \cdot \rangle}

\hoffset=-15mm
\voffset=-10mm
\textwidth 155mm
\textheight 22cm

\input xy
\xyoption{all}

\begin{document}

\title[Signatures of certain even symmetric forms]{The signature of an even symmetric form with vanishing associated linking form}
\begin{abstract}
We prove that the signature of an even, symmetric form on a finite rank integral lattice, has signature divisible by $8$, provided its associated linking form vanishes in the Witt group of linking forms. Our result generalizes the well know fact that an even, unimodular form has signature divisible by $8$. 

We give applications to signatures of $4n$-dimensional manifolds, signatures of classical knots, and  provide new restrictions to solutions of certain Diophantine equations.  
\end{abstract}
\author{Stanislav Jabuka}
\email{jabuka@unr.edu}
\address{Department of Mathematics and Statistics, University of Nevada, Reno NV 89557.}
\thanks{This work was generously supported by a research grant from the University of Nevada, Reno.}
\maketitle
%%%%%
%%%%%
%%%%%
%%%%%
%%%%%
%%%%%
\section{Introduction}
\subsection{Background and motivation} 
Symmetric forms on integral lattices are ubiquitous in mathematics, appearing in research areas as diverse as knot theory \cite{Rolfsen}, manifold theory \cite{GompfStipsicz}, group theory \cite{ConwaySloane}, Lie algebras \cite{Humphreys}, elliptic curves \cite{Husemoller}, number theory \cite{Hasse}, and others.  Depending on context, different notions of equivalence between symmetric forms are studied. For some notions of equivalence, complete sets of invariants are known while for others only partial invariants exist. Regardless of the context, one of the most basic invariants of a symmetric form is its signature.  Forms with signature equal to plus or minus their rank play a special role in many areas of mathematics, so do forms with non-vanishing signature, etc. 

It is basic question, and yet not an easy one in general, to try to deduce particulars about the signature of a symmetric form from it's  various properties, or lack thereof. An illustration of this principle is given by the  classical example \cite{Milnor}: 
%%%
\begin{theorem} \label{ExampleOfUnimodularEvenForm}
The signature of a unimodular even symmetric form is a multiple of $8$. 
\end{theorem}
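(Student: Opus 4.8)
The plan is to reduce the statement to a congruence modulo $8$ for the signature and then establish that congruence through a Gauss-sum (Milgram) computation. Write $(L,\langle\cdot,\cdot\rangle)$ for the given even, unimodular, symmetric form of rank $n$ and signature $\sigma$. Since $\sigma$ being a multiple of $8$ is equivalent to $e^{2\pi i\sigma/8}=1$, it suffices to exhibit $e^{2\pi i\sigma/8}$ as the value of a Gauss sum that is forced to equal $1$ in the unimodular, even case. The natural candidate is the Gauss--Milgram reciprocity formula, which asserts that for any nondegenerate integral lattice the normalized Gauss sum over the discriminant group $L^{\ast}/L$, weighted by the induced discriminant quadratic form, equals $e^{2\pi i\sigma/8}$.

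For an even form, $0$ is a characteristic element, because $\langle x,x\rangle$ is always even, so the congruence $\langle 0,x\rangle\equiv\langle x,x\rangle \pmod 2$ holds trivially; equivalently, the discriminant quadratic form on $L^{\ast}/L$ is genuinely $\mathbb{Q}/2\mathbb{Z}$-valued. For a unimodular form the discriminant group $L^{\ast}/L$ is trivial, so the Gauss sum in Milgram's formula consists of the single term indexed by $0$ and equals $1$. Hence $e^{2\pi i\sigma/8}=1$, i.e. $8\mid\sigma$. Equivalently, one may invoke van der Blij's congruence $\sigma\equiv\langle w,w\rangle\pmod 8$ for a characteristic vector $w$, and note that $w=0$ is characteristic for an even form, giving $\sigma\equiv 0\pmod 8$ directly.

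The main obstacle is proving the Gauss--Milgram formula (equivalently van der Blij's congruence) itself. I would establish it by comparing two evaluations of a Gaussian: on the one hand, diagonalizing the form over $\mathbb{R}$ and using the classical one-dimensional values $\int_{\mathbb{R}}e^{\pm\pi i t^{2}}\,dt=e^{\pm\pi i/4}$ shows that the continuous Gaussian integral over $L\otimes\mathbb{R}$ carries the phase $e^{\pi i\sigma/4}$; on the other hand, applying Poisson summation to the associated lattice theta function relates the sum over $L$ to the sum over the dual lattice $L^{\ast}$, producing exactly the same phase together with the discriminant Gauss sum. Matching the two expressions yields the reciprocity formula. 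The delicate points are the convergence and regularization of the oscillatory Gaussian, best handled by a limiting argument with a damping factor $e^{-\eps\langle x,x\rangle}$ as $\eps\to 0^{+}$, and the careful bookkeeping of the eighth-root-of-unity phase.

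As an alternative, largely algebraic route, I would reduce to the classification of indefinite even unimodular forms. Adjoining a hyperbolic plane $H$ does not change the signature and renders the form indefinite, so $L\oplus H$ is an indefinite even unimodular lattice; by the Hasse--Minkowski classification such a lattice is isomorphic to an orthogonal sum $p\,H\oplus q\,E_{8}$, with $E_{8}$ the rank-$8$ positive definite even unimodular form of signature $8$ and $q$ allowed to be negative. Each summand has signature divisible by $8$, so $\sigma(L)=\sigma(L\oplus H)=8q$ is a multiple of $8$. Here the hard input is the classification theorem, which itself ultimately rests on $p$-adic Gauss-sum invariants, so the two strategies are not genuinely independent.
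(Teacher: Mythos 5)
Your primary argument coincides with the paper's: the paper obtains this statement as the unimodular special case of Theorem \ref{main}, whose proof invokes Milgram's Gauss-sum formula (Theorem \ref{TheoremFromMilnor}) --- for an even unimodular lattice $L^\#/L$ is trivial, so the Gauss sum is the single term $1$, and $1=\sqrt{|\det b|}\,e^{2\pi i\sigma/8}$ forces $8\mid\sigma$, exactly as you argue (the paper, like you, takes Milgram's formula as known input, citing Appendix 4 of Milnor). One caveat on your alternative route: it is not merely ``not independent'' but circular, since the assertion $L\oplus H\cong pH\oplus qE_8$ already presupposes that the signature of an even unimodular lattice is divisible by $8$ (a model lattice with the given invariants exists only under that congruence); your main Gauss--Milgram argument, however, is complete and matches the paper.
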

In the present work, we aim to generalize this result to a broader class of symmetric forms. We still only consider even symmetric forms, but we shall relax the unimodularity condition. Instead, we will ask that the associated linking form of the given symmetric form be vanishing in a certain Witt ring. Before stating our results, we set the stage by giving our  terminology precise meaning.  
%%%
%%%
\begin{definition} \label{DefintionOfForms}
Let $V$ be a finite dimensional rational vector space. 
\begin{itemize}
\item[(i)] A {\em (rational) symmetric form} on $V$ is a symmetric, bilinear, non-degenerate map $b:V\times V\to \mathbb Q$. We shall refer to the pair $(V,b)$ or $b$ itself, as a (rational) symmetric form. 
%%%
\item[(ii)] An {\em integral lattice $L$ in a rational vector space $V$} is a free Abelian subgroup $L$ of $V$ such that $L\otimes _{\z} \q = V$. An {\em integral lattice $L$} is an integral lattice in some rational vector space $V$.  

A {\em (integral) symmetric form} on an integral lattice $L$ is a symmetric, bilinear, non-degenerate map $b:L\times L\to\z$. We shall refer to the pair $(L,b)$ or to $b$ itself, as a (integral) symmetric form. 
%%%
\item[(iii)] A {\em linking form} is a pair $(G,\lambda )$ with $G$ a finite Abelian group and with $\lambda :G\times G \to \q/\z$ a non-degenerate, symmetric, bilinear form. A linking form $(G,\lambda)$ is called {\em metabolic} if there exists a subgroup $H$ of $G$ with $|H|^2=|G|$ and $\lambda |_{H\times H} \equiv 0$.
\end{itemize}
\end{definition}
%%%
%%%
%All three notions of non-degeneracy referred to above mean that the maps 
%
%$$ v\to \xi(v,\cdot):M\to Hom_R(M,R) $$ 
%
%are isomorphisms for each of the three cases of $\xi = b,b,\lambda$ and $M=V,L,G$ and $R=\q,\z,\q/\z$ respectively. 

\begin{remark} \label{RemarkAboutEvenAndOdd}
If $b:L\times L\to \z$ is a symmetric form on the lattice $L\subseteq V$, then $b$ induces a symmetric form  $b\otimes \text{id}:V\times V\to \q$ with $(b\otimes \text{id})(v\otimes r, w\otimes s) = b(v,w)rs$, where $v,w\in L$, $r,s\in\q$. However, a symmetric form $b:V\times V\to \q$ does not in general induce a symmetric from on a lattice $L\subseteq V$ by restriction, this only happens if Im$(b|_{L\times L}) \subseteq \z$. Given the latter condition, we define the {\em dual lattice $L^\#$ of $L$ in $V$} as the integral lattice $L^\# = \{v\in V \, |\, b(u,v) \in \z, \, \text{ for all }u\in L\},$ and note that $L$ is a subgroup of $L^\#$.
\end{remark}

An integral symmetric form $(L,b)$ is called {\em even} if $b(u,u)$ is an even integer for all $u\in L$, else $L$ is called {\em odd}.  The {\em signature} and {\em determinant} of a rational or integral symmetric form are the signature and determinant of any of its matrix representatives. The determinant of a rational form is only well defined up to multiplication by elements in $\dot \q^2$. 
%We do not define signatures for linking forms. 
  
The Witt group $W(\q)$ of the rationals and the Witt group $W(\q/\z)$ of linking forms on finite Abelian groups, will be introduced in Section \ref{SectionOnBackground}. They are both infinite Abelian groups of which the former consists of equivalence classes of rational symmetric forms $(V,b)$, while the latter is comprised of equivalence classes of linking forms $(G,\lambda)$. The nature of the equivalence relations is recounted in Section \ref{SectionOnBackground}. These two groups fit into an exact sequence 

\centerline{
\xymatrix{
0 \ar[r] &\z \ar[rr]^{\iota \phantom{iii}} & & W(\q)  \ar[rr]^{\partial} &  &W(\q/\z) \ar[r] & 0, \\
}
}
\vskip1mm
\noindent for which full details are provided in Section \ref{SectionOnBackground}. For now it suffices to say that $\partial (V,b)$ is the linking form $(L^\#/L,\lambda _b)$ (with $L^\#$ as in Remark \ref{RemarkAboutEvenAndOdd}) associated to any integral lattice $L\subseteq V$, and with $\lambda _b$ given by 
$$\lambda _b:(L^\#/L)\times (L^\#/L) \to \q/\z, \quad \lambda (u+L,v+L) = b(u,v) +\z.$$
Finiteness of the group $L^\#/L$ follows from the non-degeneracy assumption on $b$.
%%%
%%%
\begin{definition}  \label{AssociatedLinkingFormDefinition}
Let $(L,b)$ be an integral symmetric form and set $V = L\otimes _\z\q$, so that $(V,b\otimes \text{id})$ is a rational symmetric form. Then  $\partial (V,b\otimes \text{id}) = (L^\#/L,\lambda _b)$ is called the {\em linking form associated to the integral symmetric form $(L,b)$}. 
\end{definition}
%%%
%%%
With these preliminaries understood, we state our main result.  
%%%
%%%
\begin{theorem} \label{main}
Let $(L,b)$ be an even integral symmetric form of odd determinant. If its associated linking form (Definition \ref{AssociatedLinkingFormDefinition}) vanishes  in the Witt group $W(\q/\z)$,  then the signature of $b$ is a multiple of $8$. 
\end{theorem}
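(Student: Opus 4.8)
The plan is to deduce the theorem from the classical Theorem~\ref{ExampleOfUnimodularEvenForm} by enlarging $(L,b)$ to an even \emph{unimodular} overlattice of the same signature. Set $V=L\otimes_\z\q$ and $G=L^\#/L$, and note that $|G|=|\det b|$ is odd by hypothesis; the oddness of $|G|$ is the single feature that drives the entire argument. Vanishing of the associated linking form $(G,\lambda_b)$ in $W(\q/\z)$ means it is metabolic in the sense of Definition~\ref{DefintionOfForms}(iii) (the equivalence of Witt-triviality and metabolicity for linking forms is part of the background of Section~\ref{SectionOnBackground}), so there is a subgroup $H\le G$ with $|H|^2=|G|$ and $\lambda_b|_{H\times H}\equiv 0$.

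First I would turn the metabolizer $H$ into a lattice. Let $\pi:L^\#\to G$ denote the projection and put $M=\pi^{-1}(H)$, so that $L\subseteq M\subseteq L^\#$, the subgroup $M$ is a lattice in $V$ with $M\otimes_\z\q=V$, and $[M:L]=|H|$. The vanishing $\lambda_b|_{H\times H}\equiv 0$ says exactly that $b$ is integer-valued on $M$, so $(M,b|_M)$ is an integral symmetric form. The index formula $\det(b|_L)=[M:L]^2\det(b|_M)$ together with $|\det b|=|G|=|H|^2=[M:L]^2$ then forces $|\det(b|_M)|=1$, so $(M,b|_M)$ is unimodular. Finally, because $M\otimes_\z\q=V$, the signatures of $b|_M$ and $b$ coincide, both being the signature of $b\otimes\mathrm{id}$ on $V$.

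The crux, and the only place where odd determinant is essential, is to check that $(M,b|_M)$ is \emph{even}. Fix $x\in M$ and let $m$ be the order of $\pi(x)$ in $G$; then $m$ divides $|G|$ and is therefore odd, and $mx\in L$. Since $L$ is even, $b(mx,mx)=m^2\,b(x,x)$ is an even integer, while $b(x,x)\in\z$ because $M$ is integral; as $m^2$ is odd this forces $b(x,x)$ to be even. Hence $(M,b|_M)$ is an even unimodular form, so Theorem~\ref{ExampleOfUnimodularEvenForm} gives that $\operatorname{sign}(b|_M)=\operatorname{sign}(b)$ is a multiple of $8$, completing the proof. I expect the main obstacle to be precisely this evenness step: enlarging a metabolizer to a unimodular overlattice is routine, but evenness of the overlattice is \emph{not} automatic and fails when $\det b$ is even, so the argument must exploit the odd order of $G$ in an essential way—here through the parity trick $m^2\,b(x,x)\in 2\z$ with $m$ odd. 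As an alternative to the final appeal to Theorem~\ref{ExampleOfUnimodularEvenForm}, one could instead evaluate the Gauss--Milgram sum of the discriminant quadratic form of $(L,b)$ over the Lagrangian $H$, but the overlattice reduction above seems cleanest.
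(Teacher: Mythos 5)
Your proposal is correct, and it differs from the paper's proof only in the endgame. The overlattice construction is precisely the paper's Lemma~\ref{AboutOrthogonalComplements}: the paper likewise lifts the metabolizer $N=L_1/L$ to a lattice $L_1$ (your $M$), gets integrality of $b$ on $L_1$ from $\lambda_b|_{N\times N}\equiv 0$, proves self-duality via $N=N^\perp\Rightarrow L_1^\#=L_1$ (where you instead use the index formula $\det(b|_L)=[M:L]^2\det(b|_M)$ --- equivalent bookkeeping), and establishes evenness by the same odd-square parity trick, except that the paper scales by the single odd number $m=\det b$ (Remark~\ref{RemarkAboutDeterminant}, $m\cdot u\in L$ for all $u\in L^\#$) while you scale each $x\in M$ by the order of its image in $L^\#/L$; both work. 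The genuine divergence: the paper never invokes Theorem~\ref{ExampleOfUnimodularEvenForm}, but instead finishes with Gauss sums, combining $G(L)=m\cdot G(L_1)$ (Lemma~\ref{LemmaFromMilnor}) with $G(L_1)=1$ and Milgram's formula $G(L)=\sqrt{|\det b|}\,e^{2\pi i\,\sigma(b)/8}$ (Theorem~\ref{TheoremFromMilnor}) to conclude $e^{2\pi i\,\sigma(b)/8}=1$. Your reduction is shorter and black-boxes the analytic input --- legitimately, since Theorem~\ref{ExampleOfUnimodularEvenForm} is quotable, though its standard proof is itself Milgram's formula, so the two arguments are morally the same computation; what yours buys is conceptual transparency (the theorem is ``even unimodular in disguise,'' signature being preserved under passage to a finite-index integral overlattice), while the paper's version stays self-contained within Milnor's Appendix~4 framework. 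Two side remarks: your step ``vanishes in $W(\q/\z)$, hence metabolic'' silently uses Witt cancellation for linking forms (stably metabolic implies metabolic); the paper makes the identical unremarked step, so this is not a gap relative to it, but it merits a citation. And your closing diagnosis of where odd determinant is essential is exactly right: for even determinant the overlattice can be odd, as in $D_4\subset\z^4$, where the linking form of $D_4$ is metabolic, the unique intermediate unimodular overlattice $\z^4$ is odd, and indeed $\sigma(D_4)=4\not\equiv 0 \ (\mathrm{mod}\ 8)$.
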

%%%
%%%
Many examples of integral symmetric forms meeting the hypothesis of Theorem \ref{main} exist, we list several below. If $b$ is even and unimodular, the associated linking form of $(L,b)$ automatically vanishes in $W(\q/\z)$ (see \cite{Milnor}), showing that Theorem \ref{main} generalizes Theorem \ref{ExampleOfUnimodularEvenForm}.  
%%%%%%%%%%%%%%%%%%%%%%%%%%%%%%%%%%%%%%%%%%%%%%%%%%%%%%%%%%%%%%%%%%%%%%%%%%%%%%%%%%%%%%%%%%%%%%%%%%%%%%%%%%%%%%%%%%%%%%%%%%%%%%%%%%%%%%%%%%%%%%%%%%%%%%%%%%%%%%%%%%%%%%%%%%%%%%%%%
\subsection{Applications}
%%%
%%%
Signatures of symmetric forms on integral lattices appear in many different contexts, giving Theorem \ref{main} a wide spectrum of applicability. The results presented in this section are sample applications among many possible. We focus on three areas: Signatures of $4n$-dimensional manifolds, knot signatures and Diophantine equations.  
%%%%%%%%%%%%%%%%%%%%%%%%%%%%%%%%%%%%%%%%%%%%%%%%%%%%%%%%%%%%%%%%%%%%%%%%%%%%%%%%%%%%%%%%%%%%%%%%%%%%%%%%%%%%%%%%%%%%%%%%%%%%%%%%%%%%%%%%%%%%%%%%%%%%%%%%%%%%
\subsubsection{Intersection forms of $4n$-dimensional manifolds} 
%%%
%%%
Let $X$ be a $4n$-dimensional, oriented, compact manifold $X$ with $Y=\partial X$ and assume that $Y$ is a rational homology $(4n-1)$-sphere. The {\em intersection form $Q_X$} is the symmetric, bilinear, non-degenerate form on $H^{2n}(X;\z)/Tor$ defined via the cup product, followed by evaluation on the fundamental class $[X]\in H_{4n}(X;\z)$ of $X$: 
$$Q_X:(H^2(X;\z)/Tor)\times (H^2(X;\z)/Tor)\to \z, \quad Q_X(a,b) = (a\smile b)[X].$$
The pair $(H^2(X;\z)/Tor,Q_X)$ is an integral symmetric form whose signature we denote  by $\sigma (X)$ and refer to as the {\em signature of $X$}. 

The {\em linking form $\lambda _Y$} of the $(4n-1)$-dimensional rational homology sphere $Y$ is the symmetric, bilinear linking form on $H^{2n}(Y;\z)$ defined through the cup product on $Y$, followed again by evaluation on the fundamental class $[Y]\in H_{4n-1}(Y;\z)$:
$$\lambda _Y:H^{2n}(Y;\z)\times H^{2n}(Y;\z) \to \q/\z, \quad \lambda _Y(\alpha, \beta) = \frac{1}{m} (\alpha \smile \sigma)[Y] + \z.$$
Here $m\in \mathbb N$ is such that $m\cdot \beta = 0 \in H^{2n}(Y;\z)$ and $\sigma$ is a $(2n-1)$-cochain on $Y$ with $\delta \sigma = m\cdot \beta$.  With this understood, we have the following consequence of Theorem \ref{main}.  
%%%
%%%
\begin{proposition} \label{SignatureOf4nManifolds}
Let $X$ be $4n$-dimensional, oriented, compact manifold with boundary $Y$ a rational homology $(4n-1)$-sphere, and assume that its intersection form $Q_X$ is even. If $|H^{2n}(Y;\z)|$ is odd and the linking form $\lambda _Y$ of $Y$ is metabolic, then $\sigma (X) \equiv 0\, (\text{mod } 8)$.   
\end{proposition}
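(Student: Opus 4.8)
The plan is to derive Proposition \ref{SignatureOf4nManifolds} directly from Theorem \ref{main} by checking that the intersection form $(H^2(X;\z)/Tor, Q_X)$ satisfies all three hypotheses of that theorem. First I would observe that $Q_X$ is given to be even, so it remains to verify the odd-determinant condition and the vanishing of the associated linking form in $W(\q/\z)$. The central geometric input is the well-known identification of the discriminant group and linking form of the intersection lattice with the homological data of the boundary $Y$: under the hypotheses at hand there is an isomorphism $L^\#/L \cong H^{2n}(Y;\z)$ (equivalently the relevant torsion in $H_*(Y)$) carrying the algebraically-defined linking form $\lambda_{Q_X}$ on $L^\#/L$ to the topologically-defined linking form $\lambda_Y$ of $Y$. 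This is the standard fact that the boundary linking form is the discriminant form of the interior intersection form, and I would invoke it via the long exact sequence of the pair $(X,Y)$ together with Poincar\'e--Lefschetz duality.

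Granting this identification, the three hypotheses fall into place. The determinant of $Q_X$ equals, up to sign, the order $|L^\#/L| = |H^{2n}(Y;\z)|$, which is assumed odd; hence $Q_X$ has odd determinant. For the Witt-triviality, note that the hypothesis that $\lambda_Y$ is metabolic means there is a subgroup $H \leq H^{2n}(Y;\z)$ with $|H|^2 = |H^{2n}(Y;\z)|$ on which $\lambda_Y$ vanishes; transporting $H$ through the isomorphism above gives a metabolizer for the associated linking form of $(L,Q_X)$, so that linking form is metabolic and therefore represents $0$ in $W(\q/\z)$. Thus all hypotheses of Theorem \ref{main} are met, and we conclude $\sigma(X) \equiv 0 \pmod 8$, since $\sigma(X)$ is by definition the signature of $Q_X$.

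The main obstacle I anticipate is making the identification of linking forms fully rigorous, i.e. proving that the algebraic discriminant form $\lambda_{Q_X}$ on $L^\#/L$ genuinely agrees (not merely up to isomorphism of groups, but as linking forms) with the cup-product linking form $\lambda_Y$ on $H^{2n}(Y;\z)$. This requires carefully tracing the connecting maps in the exact sequence of $(X,Y)$ and matching the $\q/\z$-valued pairings defined by the two a priori different recipes; the cochain-level description of $\lambda_Y$ given before the proposition (using $\delta\sigma = m\cdot\beta$) is precisely what is needed to see that the boundary pairing computes $b(u,v) + \z$ on representatives. A secondary point worth verifying is that the odd-order hypothesis on $|H^{2n}(Y;\z)|$ guarantees there is no $2$-torsion subtlety in the identification, which keeps the metabolizer condition $|H|^2 = |G|$ intact after transport. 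Once these compatibilities are in hand, the proposition is an immediate corollary of Theorem \ref{main}.
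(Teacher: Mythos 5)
Your overall architecture matches the paper's --- feed $(H^{2n}(X;\z)/Tor,Q_X)$ into Theorem \ref{main} by checking its three hypotheses --- but the key input you invoke is genuinely false as stated, and this is where the gap lies. You claim as a ``standard fact'' an isomorphism of linking forms $(L^\#/L,\lambda_{Q_X})\cong (H^{2n}(Y;\z),\lambda_Y)$. This fails in general: take $X$ to be a rational homology $4$-ball with boundary a lens space such as $L(9,2)$ (which does bound one; note the paper's Example \ref{ExampleOfLensSpaces} excludes only $q=1,8$). Then $L=H_2(X;\z)/Tor=0$, so $L^\#/L=0$ and $\det Q_X=\pm 1$, while $H^2(Y;\z)\cong \z_9$ and $\lambda_Y\neq 0$; in particular neither your isomorphism nor your auxiliary identity $|\det Q_X|=|H^{2n}(Y;\z)|$ holds. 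What the long exact sequence of $(X,Y)$ plus Poincar\'e--Lefschetz duality actually yields is that $L^\#/L\cong \mathrm{coker}\bigl(H_{2n}(X;\z)/Tor \to H_{2n}(X,Y;\z)/Tor\bigr)$ embeds only as a \emph{subquotient} of $H_{2n-1}(Y;\z)\cong H^{2n}(Y;\z)$: boundary classes that die in $X$, and torsion in $H_{2n-1}(X;\z)$, obstruct the isomorphism. Consequently your plan of ``transporting the metabolizer $H$ through the isomorphism'' has nothing to be transported through, and the cochain-level verification you anticipate as the main obstacle cannot succeed in the generality of the proposition.

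The repair is exactly what the paper does: weaken the claim to the Witt-class level, citing from \cite{Alexander, BargeLannesLatourVogel} that the map $\partial$ of Theorem \ref{TheoremAboutExactSequence} sends the rational Witt class of $\left(H_{2n}(X;\q),Q_X\right)$ to \emph{minus} the Witt class of $\left(H^{2n}(Y;\z),\lambda_Y\right)$ (you also dropped this sign, though that is harmless here since $\lambda_Y$ is metabolic if and only if $-\lambda_Y$ is). At the Witt level your metabolizer-transport step evaporates: $\lambda_Y$ metabolic means its Witt class is zero by definition, hence $\partial(Q_X)=0$ immediately. For the odd-determinant hypothesis, the subquotient statement above is all that is true and all that is needed: $|\det Q_X|=|L^\#/L|$ divides $|H^{2n}(Y;\z)|$, so oddness of the latter transfers to the former. (The paper itself asserts the determinant \emph{equals} $|H^{2n}(Y;\z)|$, which as the example shows holds only under extra hypotheses such as $H_{2n-1}(X;\z)$ torsion-free; divisibility suffices for the proof.) With these two corrections your argument coincides with the paper's.
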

%%%
%%%
We note that if $Y$ is an integral homology sphere, then $Q_X$ is unimodular and the proposition follows from Theorem \ref{ExampleOfUnimodularEvenForm}. 

By way of comparison, recall that Rokhlin's theorem \cite{BlaineMichelson} posits that the signature $\sigma(X)$ of a $4$-dimensional, smooth, oriented, closed, spin manifold $X$ is divisible by $16$. If $X$ is spin then its intersection form is even, and if $X$ is also simply-connected, then it is spin if and only if its intersection form is even. While the conclusions of Proposition \ref{SignatureOf4nManifolds} are substantially weaker than Rokhlin's, so are its hypotheses. Most notably, Proposition \ref{SignatureOf4nManifolds} does not require the smoothness condition, and it applies to manifolds of all dimensions that are a multiple of $4$. 

\begin{example} \label{ExampleOfLensSpaces}
If $Y$ is the $3$-manifold obtained by $p^2/q$-framed Dehn surgery on a knot $K\subseteq S^3$, with $\gcd (p,q)=1$ and $p$ odd, then $(H^2(Y;\z),\lambda _Y)$ is metabolic. Consequently, any $4$-manifold $X$ with even intersection form $Q_X$ and with boundary $Y$, has signature a multiple of $8$. 

Many manifolds $Y$ obtained in this manner do not bound rational homology balls. Examples include lens spaces $L(9,q)$ with $q=1,8$, $L(25,q)$ with $q=1,2,3,8,12,...$, $L(49,q)$ with $=1,2,3,4,5,9,10,11,12,\dots $, etc. Accompanying $4$-manifolds $X$  with $\partial X=Y$ and $Q_X$ even, can also be found explicitly.  For instance, the boundary of the linear plumbing on $8$ vertices, each with weight $-2$, is the lens space $L(9,1)$. The signature of this $4$-manifold is $-8$. 
\end{example}
%%%%%%%%%%%%%%%%%%%%%%%%%%%%%%%%%%%%%%%%%%%%%%%%%%%%%%%%%%%%%%%%%%%%%%%%%%%%%%%%%%%%%%%%%%%%%%%%%%%%%%%%%%%%%%%%%%%%%%%%%%%%%%%%%%%%%%%%%%%%%%%%%%%%%%%%%%%%
\subsubsection{Knot signatures}
%%%
%%%
In this section we consider {\em classical knots}, that is isotopy classes of smooth embeddings of $S^1$ into $S^3$. Associated to such a knot $K$, along with the choice of a Seifert surface $\Sigma$, is its {\em Seifert form} $\xi=\xi_{K,\Sigma} :H_1(\Sigma ;\z)\times H_1(\Sigma ;\z)\to \z$. The pair $\left( H_1(\Sigma;\z),\xi+\bar \xi\,\right)$ is an integral symmetric form, where $\bar \xi (a,b) = \xi(b,a)$.  
The {\em signature $\sigma (K)$ of the knot $K$} is the signature of $\xi+\bar \xi$, and it is not hard to see that this quantity is independent of all choices. The {\em determinant $\det K$ of the knot $K$} is defined as $\det K = \det (\xi +\bar \xi)$, which is always an odd integer, and well defined up to sign. For a more detailed treatment of these concepts, see for instance \cite{Rolfsen}. 
%%%
%%%
\begin{proposition} \label{CorollaryAboutKnotSignatures}
Let $K$ be knot in $S^3$, let $\Sigma$ be a Seifert surface for $K$ and let $\xi=\xi_{K,\Sigma}$ be its associated Seifert form. If $\partial (\xi+\bar \xi)=0\in W(\q/\z)$ then $\sigma (K)$ is a multiple of $8$. 
\end{proposition}
%%%
%%%
To put this result into context, we note that according to Theorem 5.6 in \cite{Murasugi}, the modulus of $\sigma (K)$ with respect to $4$ is determined by its determinant in that 
\begin{equation} \label{SignatureCongruencesDependingOnDeterminant}
\sigma (K) \equiv \left\{
\begin{array}{cl}
0 \, (\text{mod } 4) & \quad ; \quad |\det K| \equiv 1 \, (\text{mod } 4), \cr
2 \, (\text{mod } 4) & \quad ; \quad |\det K| \equiv 3 \, (\text{mod } 4).
\end{array}
\right.
\end{equation} 
All knots $K$ with $\partial (\xi+\bar \xi)=0$ have, up to sign, a square odd determinant, and thus satisfy the congruence $|\det K| \equiv 1 \, (\text{mod } 4)$. Accordingly, Proposition \ref{CorollaryAboutKnotSignatures} can be seen as a refinement of the first line in \eqref{SignatureCongruencesDependingOnDeterminant} (for those knots $K$ with $\partial (\xi +\bar \xi)=0$). 

\begin{example} \label{LowCrossingKnots}
Instances of low crossing knots that satisfy the hypothesis of Proposition \ref{CorollaryAboutKnotSignatures} are given by $9_1$, $11a_{263}$, $11a_{334}$, $11a_{364}$ and $12a_{0093}$ (our notation referring to that from KnotInfo \cite{KnotInfo}). 

Additional examples can be obtained by considering connected sums $K_1\# K_2$ of knots $K_1, K_2$ in which both $K_1$ and $K_2$ fail the hypothesis of Theorem \ref{main}, but with $K_1\# K_2$ satisfying it. Examples of this kind are given by  $7_5\#8_2$, $6_3\#8_1$. 

All of these examples have signature $-8$ except $6_3\#8_1$ which has signature $0$. 
\end{example}
%%%%%%%%%%%%%%%%%%%%%%%%%%%%%%%%%%%%%%%%%%%%%%%%%%%%%%%%%%%%%%%%%%%%%%%%%%%%%%%%%%%%%%%%%%%%%%%%%%%%%%%%%%%%%%%%%%%%%%%%%%%%%%%%%%%%%%%%%%%%%%%%%%%%%%%%%%%%
\subsubsection{Diophantine equations} \label{SubsectioinOnDiophantineEquations}
%%%
%%%
The applications presented in this section are inspired by the work of Stoimenov \cite{Stoimenow}. Stoimenow considers knots $K = K(p_1,...,p_n)$ that depend on a number of integral parameters $p_1,...,p_n\in \mathbb Z$, and asks what can be said about the solutions of the equation $\det K = \pm 1$? The determinant of $K$ can be computed from the parameters $p_1,...,p_n$, typically as a polynomial function, turning the relation $\det K=\pm 1$ into a Diophantine equation in the unknowns $p_1,..,p_n$. 

If the determinant of $K$ is $\pm 1$, then the symmetric Seifert form $\xi +\bar \xi$ of $K$ is unimodular and even, and as such has signature a multiple of 8 (cf. Theorem \ref{ExampleOfUnimodularEvenForm}). Computing the signature $\sigma (K)$ in terms of $p_1,...,p_n$ and evaluating the congruence $\sigma (K) \equiv 0\, (\text{mod } 8)$ gives restriction on the parameters $p_1,...,p_n$, and hence information about the solution set of the Diophantine equation $\det K =\pm 1$. 

We take the same approach here but consider the more general equation $\det K = \pm m^2$, with $m$ an odd integer. We can still conclude that $\sigma (K) \equiv 0\, (\text{mod } 8)$ provided that $\partial (\xi +\bar \xi) = 0\in W(\q/\z)$. While the Diophantine equations we consider are more general than those from \cite{Stoimenow}, the families of knots we can use are more restrictive, owing to the condition $\partial (\xi +\bar \xi) = 0$. Nevertheless, we are able to present an example where the latter condition is implied automatically by the equation $\det K = -m^2$ itself. Namely, consider the pretzel knot $K=P(p,q,r)$ with $p,q$ odd and $r$ an even integer as defined in Figure \ref{pic1}. 
\begin{figure}[htb!] 
\centering
\includegraphics[width=7cm]{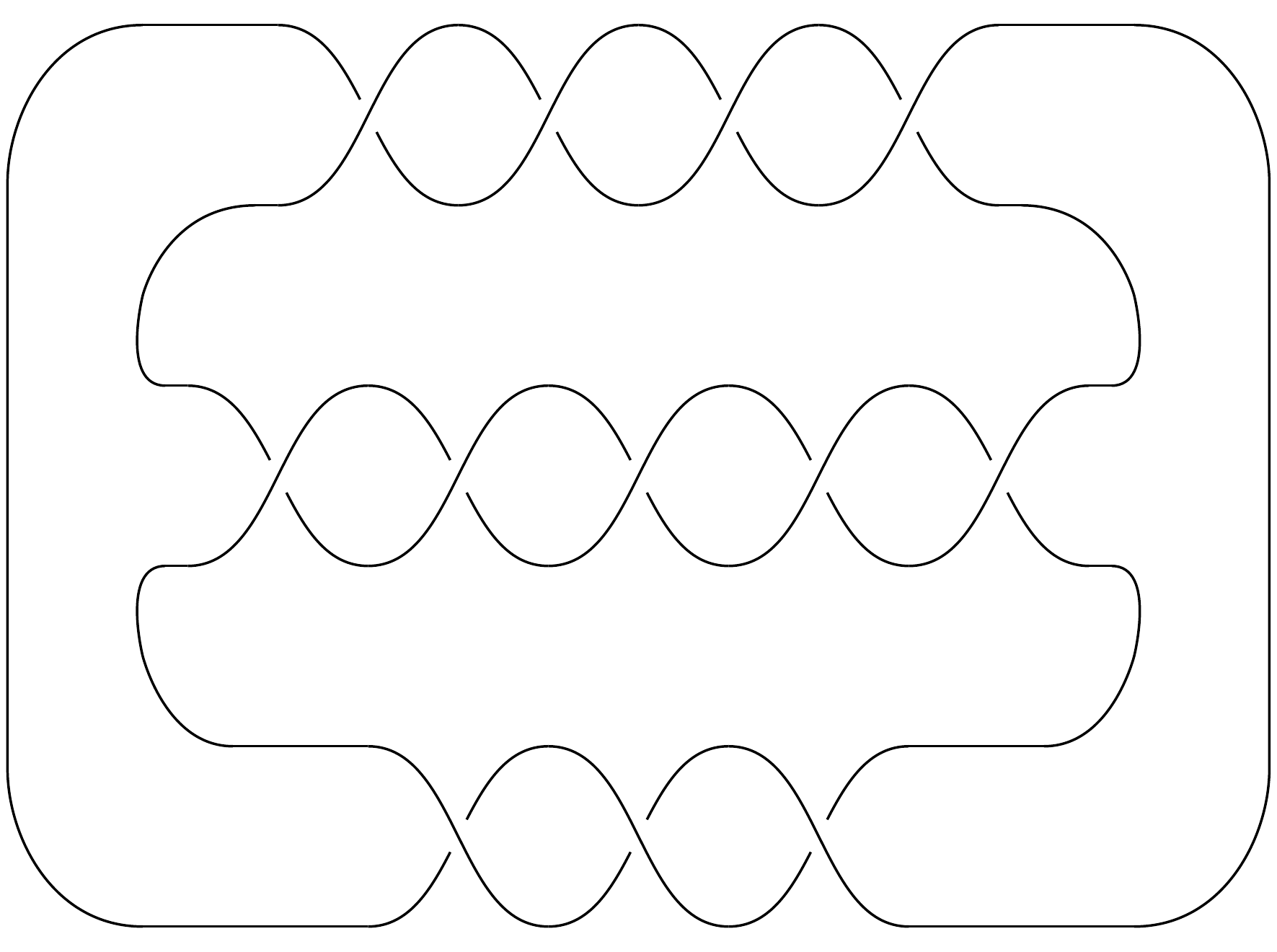}
\caption{The pretzel knot $P(p,q,r)$ is formed by starting with $3$ pairs of parallel strands, placing $p$, $q$ and $r$ half-twists into them, and connecting their ends as shown in the picture. The half-twists are right-handed if associated to a positive integer, else they are left-handed. The picture shows the knot $P(3,-5,-4)$. }  \label{pic1}
\end{figure}
The equation $\det K = -m^2$ becomes 
\begin{equation}  \label{DiophantineEquation}
pq+pr+qr = -m^2. 
\end{equation}   
Examining the reduction of equation \eqref{DiophantineEquation} modulo $8$, it is easy to see that $p+q$ must be congruent to either $0$ or $4$ modulo $8$. An application of Theorem \ref{main} leads to the following strengthened conclusion. 
%%%
%%%
\begin{proposition} \label{DiophantineTheorem}
If $p,q,m $ are odd integers and $r$ is an even integer that solve the Diophantine equation $pq+pr+qr = -m^2$, then $ p+q \equiv 0\, (\text{mod } 8)$.
\end{proposition}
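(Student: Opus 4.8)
The plan is to apply Theorem \ref{main} to the symmetric Seifert form $A=\xi+\bar\xi$ of the pretzel knot $K=P(p,q,r)$. Two of the three hypotheses are automatic: every symmetrized Seifert form is even, its diagonal entries being $2\,\xi(e_i,e_i)$, and $\det A=\pm\det K=\pm m^2$ is odd since $m$ is odd. The substance is therefore to check that the associated linking form $\partial A$ vanishes in $W(\q/\z)$, to compute $\sigma(K)=\sigma(A)$, and to read off the congruence on $p+q$.

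For the vanishing I would represent $\partial A$ by the linking form on $L^\#/L\cong\operatorname{coker}A$, a group of order $|\det A|=m^2$ which is also the first homology of the double branched cover and is presented by the Goeritz matrix $\mathcal G=\left(\begin{smallmatrix}p+q&-q\\-q&q+r\end{smallmatrix}\right)$, with $\det\mathcal G=pq+pr+qr=-m^2$. When $\gcd(p,q,r)=1$ this group is cyclic of order $m^2$, and its unique subgroup of order $m$ is a metabolizer: for a generator $g$ with $\lambda(g,g)=\beta/m^2$ one has $\lambda(mg,mg)=\beta\in\z$, i.e. $\lambda(mg,mg)=0$ in $\q/\z$, so $\partial A=0$. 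This is precisely the sense in which $\det K=-m^2$ forces the hypothesis of Theorem \ref{main}; the case $\gcd(p,q,r)>1$ I would reduce to this one by scaling out the common (odd) factor.

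To compute the signature I would use the Gordon--Litherland formula for the pretzel spanning surface $F$ (non-orientable, as $r$ is even), which gives $\sigma(K)=\operatorname{sig}(\mathcal G)-\mu(F)$ with $\mathcal G$ as above. Calibrating the correction term on small pretzels (e.g.\ $P(1,1,2)$, $P(1,1,4)$, and the two trefoils $P(\pm1,\pm1,\mp2)$) indicates $\mu(F)=p+q$, the signed twist contribution of the two odd tangles. Because $\det\mathcal G=-m^2<0$, the rank-two form $\mathcal G$ is indefinite, so $\operatorname{sig}(\mathcal G)=0$ and hence $\sigma(K)=-(p+q)$. Theorem \ref{main} then yields $\sigma(K)\equiv0\ (\mathrm{mod}\ 8)$, which is the desired statement $p+q\equiv0\ (\mathrm{mod}\ 8)$.

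The main obstacle I anticipate is the exact signature computation, specifically pinning down the correction $\mu(F)=p+q$ with the correct sign for all sign patterns of $(p,q,r)$ --- equivalently, carrying out the linking-number bookkeeping on the explicit genus-$\tfrac{|p|+|q|}{2}$ Seifert surface produced by Seifert's algorithm, in which regions $1$ and $2$ contribute parallel twist blocks and region $3$ an anti-parallel chain. As a consistency check, the elementary reduction of $pq+pr+qr=-m^2$ modulo $8$ already confines $p+q$ to $\{0,4\}$ modulo $8$, and the identity $\sigma(K)=-(p+q)$ together with Theorem \ref{main} is exactly what excludes the residue $4$.
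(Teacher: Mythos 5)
Your strategy is sound and genuinely different from the paper's. The paper expands the rank-four rational Witt class $\langle p\rangle\oplus\langle q\rangle\oplus\langle r\rangle\oplus\langle pqr\rangle$ of $\xi+\bar\xi$ and kills it prime by prime with $\partial_\wp$, via a lengthy case analysis on the $\wp$-adic valuations of $p,q,r,m$ driven by the equation $pq+pr+qr=-m^2$; it then quotes the closed signature formula from \cite{jabuka1}, which simplifies to $\sigma(K)=-(p+q)$ precisely because $pq+pr+qr<0$. You instead work directly with the linking form of the double branched cover through its rank-two Goeritz presentation $\mathcal G$, and get the signature from Gordon--Litherland. Your coprime-case observation is correct and slick: since $\gcd$ of the entries of $\mathcal G$ equals $\gcd(p,q,r)$, the cokernel is cyclic of order $m^2$ when $\gcd(p,q,r)=1$, and any nondegenerate linking form on a cyclic group of square order is metabolic via its index-$m$ subgroup. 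Your signature endpoint $\sigma(K)=-(p+q)$ also agrees with the paper's.

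There is, however, a genuine gap: the reduction of the case $\gcd(p,q,r)=d>1$ by ``scaling out the common factor.'' Scaling replaces $\mathcal G$ by $\mathcal G/d$ and $\operatorname{coker}\mathcal G\cong \z_d\oplus\z_{m^2/d}$ by $\operatorname{coker}(\mathcal G/d)$, and metabolicity does not pass between these forms. The same reduction applied to the companion equation \eqref{DiophantineEquation2} exposes the fallacy: $(p,q,r)=(-9,3,-6)$ with $m=3$ scales to $(-3,1,-2)$, which has determinant $1$ and trivially metabolic linking form, yet the linking form of $P(-9,3,-6)$ is $\tfrac{1}{3}\left(\begin{smallmatrix}2&1\\1&1\end{smallmatrix}\right)$ on $\z_3\oplus\z_3$, which represents no zero since $2x^2+2xy+y^2\not\equiv 0\ (\text{mod } 3)$ for $(x,y)\neq(0,0)$, and so is \emph{not} metabolic --- consistent with the paper's table showing no mod-$8$ constraint for \eqref{DiophantineEquation2}. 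Note also that your cyclic-order-$m^2$ argument never uses the minus sign and applies verbatim to coprime solutions of \eqref{DiophantineEquation2}; in your scheme the entire force of the sign must therefore be carried by the non-coprime case, which is exactly the case your argument omits. Fortunately the repair lives inside your own setup and is where the sign genuinely enters: $\det\mathcal G=-m^2$ means $-\det\mathcal G$ is a square, so the binary rational form $\mathcal G$ is isotropic, hence hyperbolic, hence zero in $W(\q)$; since $\partial$ is well defined on Witt classes (Theorem \ref{TheoremAboutExactSequence}), $\partial\mathcal G=0$ for \emph{all} $p,q,r$, with no gcd hypothesis and no case split --- arguably cleaner than both your dichotomy and the paper's four-case $\partial_\wp$ computation. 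Your second gap, which you flagged yourself, is the calibration $\mu(F)=p+q$ in the Gordon--Litherland formula: checking small examples is not a proof, and for a complete argument you should either carry out that bookkeeping in general or simply cite the signature formula of \cite{jabuka1}, as the paper does.
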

%%%
%%%
To underscore the subtlety of this result, consider the slight modification of equation \eqref{DiophantineEquation} given by 
\begin{equation} \label{DiophantineEquation2}
pq+pr+qr = m^2. 
\end{equation}
A reduction argument shows that $p+q$ must now be congruent to either $2$ or $6$ modulo 8. However, no additional restriction akin to that from Proposition \ref{DiophantineTheorem} exist in this setting, as is clear from the table of examples below:  
\begin{center}
\begin{tabular}[t]{|c|c||c|} \hline
$\mathbf{(p,q,r)}$ &  $\mathbf m$ & $\mathbf{p+q \,\, (\text{\bf mod } 8)}$  \cr \hline \hline
$(3,7,6)$ &  $9$ & $2$  \cr \hline
$(-9,3,-6)$ &  $3$ & $2$  \cr \hline
$(-7,-3,-10)$ &  $11$ & $6$  \cr \hline
$(3,-5,-8)$ &  $1$ & $6$  \cr \hline
\end{tabular}
\end{center}

The reason for this is that equation \eqref{DiophantineEquation} implies the vanishing of $\partial (\xi +\bar \xi)$ in the Witt group of linking forms, while equation \eqref{DiophantineEquation2} does not. Accordingly, equation \eqref{DiophantineEquation} imposes  restrictions on $p+q$ stemming from the congruence $\sigma (K) \equiv 0\, (\text{mod } 8)$ from Theorem \ref{main}, while no such restrictions exist in the case of equation  \eqref{DiophantineEquation2}.
%%%%%%%%%%%%%%%%%%%%%%%%%%%%%%%%%%%%%%%%%%%%%%%%%%%%%%%%%%%%%%%%%%%%%%%%%%%%%%%%%%%%%%%%%%%%%%%%%%%%%%%%%%%%%%%%%%%%%%%%%%%%%%%%%%%%%%%%%%%%%%%%%%%%%%%%%%%%%%%%%%%%%%%%%%%%%%%%%
\subsection{Acknowledgements} I am indebted to Andrew Ranicki, Paolo Lisca and Alexis Marin for helpful comments on an earlier version of this work. 
%%%%%%%%%%%%%%%%%%%%%%%%%%%%%%%%%%%%%%%%%%%%%%%%%%%%%%%%%%%%%%%%%%%%%%%%%%%%%%%%%%%%%%%%%%%%%%%%%%%%%%%%%%%%%%%%%%%%%%%%%%%%%%%%%%%%%%%%%%%%%%%%%%%%%%%%%%%%%%%%%%%%%
%%%%%%%%%%%%%%%%%%%%%%%%%%%%%%%%%%%%%%%%%%%%%%%%%%%%%%%%%%%%%%%%%%%%%%%%%%%%%%%%%%%%%%%%%%%%%%%%%%%%%%%%%%%%%%%%%%%%%%%%%%%%%%%%%%%%%%%%%%%%%%%%%%%%%%%%%%%%%%%%%%%%%%%%%%%%%%%%%%%%%%%%%%%%%%%%%%%%%%%%%%%%%%%%%%%%%%%%%%%%%%%%%%%%%%%%%%%%%%%%%%%%%%%%%%%%%%%%%%%%%%%%%%%%%%%%%%%%%%%%%%%%%%%%%%%%%%%%%%%%%%%%%%%%%%%%%%%%%%%%%%%%%%%%%%%%%%%%%%%%%%%%%%%%%%%%%%%%%%%%%%%%%%%%%%%%%%%%%%%%%%%%%%%%%%%%%%%%%%%%%%%%%%%%%%%%%%%%%%%%%%%%%%%%%%%%%%%%%%%%%%%%%%%%%%%%%%%%%%%%%%%%%%%%%%%%%%%%%%%%%%%%%%%%%%%%%%%%%%%%%%%%%%%%%%%%%%%%%%%%%%%%%%%%
\section{Background material} \label{SectionOnBackground}
%%%%%%%%%%%%%%%%%%%%%%%%%%%%%%%%%%%%%%%%%%%%%
%%%%%%%%%%%%%%%%%%%%%%%%%%%%%%%%%%%%%%%%%%%%%
%%%%%%%%%%%%%%%%%%%%%%%%%%%%%%%%%%%%%%%%%%%%%
%%%%%%%%%%%%%%%%%%%%%%%%%%%%%%%%%%%%%%%%%%%%%
\subsection{Witt groups}
%%%
%%%
In this section we introduce the Witt group $W(\f)$ associated to a field $\f$, and the Witt group $W(\q/\z)$ of linking forms on finite Abelian groups. Both groups carry a natural ring structure but as we shall not need it in the sequel, we omit it from our treatment. Special emphasis is given to the case of $\f=\q$. Our exposition draws from  \cite{Alexander, Lam}. 

Throughout, let $\f$ denote a field. We shall give a unified treatment of Witt groups over fields and the Witt group of linking forms on finite Abelian groups. Thus, let $(M,\xi)$ be either a pair $(V,b)$ consisting of a finite dimensional $\f$-vector space $V$ and a symmetric, non-degenerate form $b:V\times V\to \f$, or a pair $(G,\lambda)$ consisting of a finite Abelian group $G$ and a symmetric, non-degenerate form (referred to as a linking form) $\lambda:G\times G\to \q/\z$. Given a subset $N\subseteq M$ we define its {\em orthogonal complement $N^\perp$} as 
$$N^\perp = \{x\in M\, |\, \xi(x,y)=0 \text{ for all } y\in N\}.$$
The  form $(M,\xi)$ is called {\em metabolic} if there exists a subspace or a subgroup $N\subseteq M$ (according to whether $M=V$ or $M=G$) such that $N=N^\perp$. In this case, we refer to $N$ as a {\em metabolizer} of $(M,\xi)$. We leave it as an exercise for the reader to verify that this notion of metabolic is equivalent to that given in part (iii) of Definition \ref{DefintionOfForms} for linking forms. 

If $(M,\xi) = (V,b)$ is metabolic,  then $(V,b)$ has  determinant a square and, if $\f=\q$, it also has signature zero. Similarly (as already implied by part (iii) of Definition \ref{DefintionOfForms}) if $(M,\xi) = (G,\lambda)$ is metabolic then $|G|$ is a square.  

Two forms $(M_1,\xi_1)$ and $(M_2,\xi_2)$ are called {\em isomorphic}  if there exists an isomorphism $\varphi :M_1\to M_2$ such that $\xi_2(\varphi (x),\varphi(y)) = \xi_1(x,y)$ for all $x,y\in M_1$. We define the direct sum $(M_1,\xi_1)\oplus(M_2,\xi_2)$ of two forms $(M_1,\xi_1)$ and $(M_2,\xi_2)$ as 
$$ (M_1,\xi_1)\oplus(M_2,\xi_2) = (M_1\oplus M_2,\xi_1+\xi_2).$$
The forms $(M_1,\xi_1)$ and $(M_2,\xi_2)$ are called {\em algebraically concordant} if there exist metabolic forms $(N_1,\eta_1)$ and $(N_2,\eta_2)$ such that $(M_1,\xi_1)\oplus (N_1,\eta_1)$ is isomorphic to $(M_2,\xi_2)\oplus (N_2,\eta_2)$. It is not hard to verify that algebraic concordance is an equivalence relation, and the set $W(R)$ of its equivalence classes (with $R=\f$ in the case of forms on vector spaces, and $R=\q/\z$ in the case of linking forms) is an Abelian group under the direct sum operation.  We shall refer to 
\begin{itemize}
\item $W(\f)$ as the {\em Witt group of the field $\f$},
%\item $W(\q)$ as the {\em Witt group of the rationals}, and 
\item $W(\q/\z)$ as the {\em Witt group of linking forms}.
\end{itemize}
The zero element of $W(R)$ is given by the equivalence class of any metabolic form $(M,\xi)$ and the inverse of $(M,\xi)$ is given by $(M,-\xi)$. 
The isomorphism types of the Witt groups $W(\f)$ are known for many fields $\f$, the isomorphism type of $W(\q/\z)$ is also well understood. The next theorem can be found in \cite{Alexander, Milnor}. 
%%%
%%%
\begin{theorem}
The isomorphism types of $W(\q)$ and $W(\q/\z)$ are given by 
$$W(\q) \cong \mathbb Z\oplus \mathbb Z_2^\infty \oplus \mathbb Z_4^\infty\quad \text{ and } \quad W(\q/\z) \cong \mathbb Z_2^\infty \oplus \mathbb Z_4^\infty.$$
In the above, $\mathbb Z_p^\infty$ denotes the countably infinite direct sum of $\z_p:=\mathbb Z/p\z$. 
\end{theorem}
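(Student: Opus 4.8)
The plan is to reduce both isomorphisms to computations of the Witt groups $W(\f_p)$ of the prime residue fields, following the localization approach of Milnor \cite{Milnor}. The central device is the split short exact sequence
$$0 \longrightarrow W(\z) \longrightarrow W(\q) \xrightarrow{\ \oplus_p \partial_p\ } \bigoplus_{p} W(\f_p) \longrightarrow 0,$$
in which $p$ ranges over all rational primes and $\partial_p$ denotes the second residue homomorphism. First I would recall that $W(\z) \cong \z$, detected by the signature and generated by the class of $\langle 1\rangle$, and that a splitting of the sequence then yields $W(\q) \cong \z \oplus \bigoplus_p W(\f_p)$.

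The next step is the finite-field computation. Over $\f_p$ with $p$ odd, every non-degenerate symmetric form diagonalizes, so its Witt class is pinned down by rank modulo $2$ together with the discriminant in $\f_p^\times/(\f_p^\times)^2\cong\z_2$; the only remaining question is the additive order of $\langle 1\rangle$. If $p\equiv 1\pmod 4$ then $-1$ is a square, so $\langle 1\rangle\oplus\langle 1\rangle\cong\langle 1\rangle\oplus\langle -1\rangle$ is metabolic and $W(\f_p)\cong\z_2\oplus\z_2$; if $p\equiv 3\pmod 4$ then $-1$ is a non-square, $\langle 1,1\rangle$ is anisotropic while $\langle 1,1,1,1\rangle$ is metabolic, giving $W(\f_p)\cong\z_4$. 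Finally $W(\f_2)\cong\z_2$. By Dirichlet's theorem there are infinitely many primes in each of the classes $1$ and $3$ modulo $4$, so $\bigoplus_p W(\f_p)\cong\z_2^\infty\oplus\z_4^\infty$, and hence $W(\q)\cong\z\oplus\z_2^\infty\oplus\z_4^\infty$.

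For the second isomorphism I would compare the two boundary maps out of $W(\q)$. In the sequence $0\to\z\xrightarrow{\iota}W(\q)\xrightarrow{\partial}W(\q/\z)\to 0$ of the introduction, the kernel of $\partial$ is exactly $\imm\iota=W(\z)$, since $\partial$ kills a class precisely when its associated linking form is metabolic, which is automatic for unimodular forms. On the other hand, decomposing a finite linking form into its $p$-primary summands identifies $\partial$, up to isomorphism of the target, with $\oplus_p\partial_p$, the Witt group of $p$-primary linking forms being isomorphic to $W(\f_p)$. Consequently $W(\q/\z)\cong\operatorname{coker}(W(\z)\to W(\q))\cong\bigoplus_p W(\f_p)\cong\z_2^\infty\oplus\z_4^\infty$, which is the remaining assertion.

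The part I expect to cost the most work is establishing the localization sequence itself: that each $\partial_p$ is well defined on Witt classes, that their common kernel is precisely $W(\z)$, and that the sequence splits. This rests on a dévissage reducing a rational form to its $p$-adic constituents, and on checking that the boundary map $\partial$ to linking forms genuinely factors through the $p$-primary residues. Reconciling the cokernel description coming from $\oplus_p\partial_p$ with the one coming from $\partial$ is where the bookkeeping, particularly at the prime $2$, is most delicate.
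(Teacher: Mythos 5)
Your proposal is correct and is essentially the argument the paper itself relies on: the theorem is stated there with a citation to Milnor and Alexander et al., and the machinery assembled in Section \ref{SectionOnBackground} --- the split exact sequence of Theorem \ref{ConditionChecker} together with the finite-field computations \eqref{FiniteFieldsWittGroups}, and the split sequence of Theorem \ref{TheoremAboutExactSequence} --- is exactly your localization proof, including the Dirichlet count of primes in each class mod $4$. One economy you could take: once both sequences $0\to\z\stackrel{\iota}{\to}W(\q)\stackrel{\partial}{\to}W(\q/\z)\to 0$ and $0\to\z\stackrel{\iota}{\to}W(\q)\stackrel{\oplus\partial_\wp}{\to}\oplus_\wp W(\f_\wp)\to 0$ are known to be exact with the common kernel $\imm \iota$, the isomorphism $W(\q/\z)\cong\oplus_\wp W(\f_\wp)$ follows immediately by comparing cokernels, so your separate d\'evissage of linking forms into $\wp$-primary pieces --- the step you rightly flag as delicate at $\wp=2$ --- can be bypassed entirely.
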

%%%
%%%
It is hard not to notice the similarity between the two groups $W(\q)$ and $W(\q/\z)$. Indeed this similarity is deeply rooted as we proceed to explain below.

Let $(V,b)$  be a rational form and $L\subseteq V$ an integral lattice. Going forward we shall only consider integral lattices for which $b(u,v) \in \z$ for all $u,v\in L$, an assumption we shall rely on tacitly. Recall that the dual lattice $L^{\#}$ of $L$ was defined as 
$$L^\# = \{v\in V\, |\, b(u,v) \in \z \text{ for all }u\in L\}.$$
Clearly $L$ is contained in $L^\#$ and by the non-degeneracy of $b$, the quotient group $L^\#/L$ is a finite group with cardinality $|\det b\,|$. Indeed, if we pick a basis $\{e_1,..,e_n\}$ of $L$ and let $[b_{i,j}]$ be a matrix representative for $b$ (so that $b_{i,j}=b(e_i,e_j)$), then $L^\#$ is the span inside of $V$ of the column vectors of the matrix $[c_{i,j}]:=[b_{i,j}]^{-1}$, that is 
$$L^\# = Span \{f_1,...,f_n\}, \quad \text{ with } \quad f_k = \sum _{i=1}^n c_{k,i} \,e_i. $$
%
%%%
%%%
\begin{remark} \label{RemarkAboutDeterminant}
In light of this characterization of $L^\#$, we point out for later use that $\det b\cdot u \in L$ for every $u\in L^\#$. 
\end{remark}
%%%
%%%
We extend $b$ to a form (of the same name) $b:L^\#\times L^\#\to \mathbb Q$ by linearity, and define $\lambda _b:(L^\#/L)\times (L^\#/L)\to \q/\z$ by setting 
$$\lambda _b(u+L,v+L) = b(u,v)+\z.$$  
Note that $\lambda _b$ is well defined as $b(u,v)$ is an integer whenever $u,v\in L$. 
%%%
%%%
\begin{theorem} \label{TheoremAboutExactSequence}
The assignment $(V,b)\mapsto (L^\#/L,\lambda _b)$, where $L\subseteq V$ is any integral lattice in $V$, %with Im$(b|_{L\times L})\subseteqeq \z$
induces a group homomorphism $\partial :W(\q) \to W(\q/\z)$ that fits into a split exact sequence 
\begin{equation}\label{TheExactSequence}
\xymatrix{
0 \ar[r] &\z \ar[rr]^{\iota \phantom{iii}} & & W(\q)    \ar@/_2pc/[ll]_\sigma \ar[rr]^{\partial \phantom{iii}} &  &W(\q/\z) \ar[r] & 0. \\
}
\end{equation}
\vskip1mm
The map $\iota$ is given by $\iota (n) = (\q^{|n|}, Sign(n)\cdot I_{|n|})$ where $I_{|n|}$ is the form $I_{|n|}(x,y) = x_1y_1+ \dots +x_{|n|}y_{|n|}$. A splitting map $\sigma :W(\q)\to \z$ is given by the signature function.
\end{theorem}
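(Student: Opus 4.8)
The plan is to verify that the assignment $(V,b)\mapsto(L^\#/L,\lambda_b)$ is well defined (independent of the lattice $L$ and invariant under algebraic concordance), that it respects direct sums, and that the resulting homomorphism $\partial$ fits into the claimed exact sequence with the signature as a splitting. First I would address \emph{well-definedness up to isomorphism}: if $L\subseteq V$ and $L'\subseteq V$ are two integral lattices on which $b$ is integral, one compares $\partial$ via a common refinement $L''\subseteq L\cap L'$, or more efficiently by passing to $L+L'$ and $L\cap L'$ and checking that the resulting linking forms differ by a metabolic form. I would also confirm that changing $L$ to an isomorphic lattice under an isometry of $(V,b)$ induces an isomorphism of linking forms, so the Witt class of $(L^\#/L,\lambda_b)$ depends only on $(V,b)$.

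Next I would show $\partial$ descends to Witt classes. The key point is that if $(V,b)$ is metabolic with metabolizer $N=N^\perp$, then choosing a lattice $L$ adapted to $N$ (so that $L\cap N$ is a lattice in $N$) produces a metabolizer $H=(L\cap N)^\#/(L\cap N)$ inside $L^\#/L$ witnessing that $\lambda_b$ is metabolic, hence zero in $W(\q/\z)$. Combined with additivity $\partial((V_1,b_1)\oplus(V_2,b_2))=\partial(V_1,b_1)\oplus\partial(V_2,b_2)$ — which follows by taking $L=L_1\oplus L_2$ and noting $(L_1\oplus L_2)^\#=L_1^\#\oplus L_2^\#$ — this shows $\partial$ sends metabolic forms to $0$ and respects the group operation, so it is a well-defined homomorphism $W(\q)\to W(\q/\z)$.

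Then I would establish exactness of \eqref{TheExactSequence}. For the left end, $\iota(n)=(\q^{|n|},Sign(n)\cdot I_{|n|})$ is represented on the standard unimodular lattice $\z^{|n|}$, for which $L^\#=L$ and so $L^\#/L$ is trivial; thus $\partial\circ\iota=0$ and $\imm\,\iota\subseteq\kerr\,\partial$. Injectivity of $\iota$ is immediate since $\sigma\circ\iota=\mathrm{id}_\z$ (the signature of $Sign(n)\cdot I_{|n|}$ is $n$), which simultaneously exhibits $\sigma$ as a splitting. For $\kerr\,\partial\subseteq\imm\,\iota$, I would argue that if $\partial(V,b)=0$ then $(L^\#/L,\lambda_b)$ is metabolic, and lift a metabolizer $H\subseteq L^\#/L$ to a lattice $\tilde L$ with $L\subseteq\tilde L\subseteq L^\#$ and $\tilde L=\tilde L^\#$, making $(V,b)$ unimodular after adjusting by a metabolic form; a unimodular rational form is Witt-equivalent to a diagonal $\pm1$ form, hence to $\iota(\sigma(V,b))$. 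Finally, surjectivity of $\partial$ can be read off from the known isomorphism types $W(\q)\cong\z\oplus\z_2^\infty\oplus\z_4^\infty$ and $W(\q/\z)\cong\z_2^\infty\oplus\z_4^\infty$, together with the fact that $\partial$ kills exactly the $\z$ summand.

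The main obstacle I anticipate is the lifting argument for $\kerr\,\partial\subseteq\imm\,\iota$: passing from a metabolizer of the \emph{linking} form back up to a unimodular lattice requires care, since the metabolizer $H$ must be enlarged to a self-dual lattice and one must verify that the correction term is genuinely metabolic in $W(\q)$ rather than merely of square determinant. Controlling this correspondence between metabolizers downstairs and unimodular lattices upstairs — essentially a clean statement of the boundary map's behavior on kernels — is where the real content lies, and I would lean on the lattice-theoretic description $L^\#=Span\{f_1,\dots,f_n\}$ given just before the theorem to make the duality bookkeeping explicit.
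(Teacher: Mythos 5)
First, a point of comparison: the paper does not prove Theorem \ref{TheoremAboutExactSequence} at all --- it is quoted as background from \cite{Alexander, Milnor}. The one ingredient in this circle of ideas that the paper does prove is Lemma \ref{AboutOrthogonalComplements}, and it is exactly the lifting step you flag as the main obstacle: a metabolizer $N = L_1/L$ of $(L^\#/L,\lambda_b)$ forces $L_1 = L_1^\#$ directly, so $(V,b)$ acquires a self-dual lattice with no ``correction by a metabolic form'' needed. Your plan for the kernel computation is therefore sound in outline, though its last step --- ``a unimodular rational form is Witt-equivalent to a diagonal $\pm 1$ form'' --- is itself a nontrivial fact: it needs either the stable classification of unimodular integral forms (add $\langle 1\rangle \oplus \langle -1\rangle$ to make the lattice odd and indefinite, then diagonalize over $\z$) or the vanishing of all second residues $\partial_\wp$ for a form of determinant $\pm 1$, and should be proved or cited rather than asserted.

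There are two genuine gaps. The more serious one is surjectivity. Knowing the abstract isomorphism types $W(\q)\cong \z\oplus\z_2^\infty\oplus\z_4^\infty$ and $W(\q/\z)\cong\z_2^\infty\oplus\z_4^\infty$, together with the fact that the kernel of $\partial$ is the $\z$ summand, does \emph{not} imply $\partial$ is onto: an injective homomorphism of $\z_2^\infty\oplus\z_4^\infty$ into itself can fail to be surjective (a shift on the index set already does), so ``$\partial$ kills exactly the $\z$ summand'' proves nothing about the image. Surjectivity needs an actual realization argument: every linking form $(G,\lambda)$ is of the form $(L^\#/L,\lambda_b)$ for some integral symmetric form $(L,b)$ --- a theorem of Wall and of Kneser--Puppe --- or, more concretely, one checks that the standard cyclic generators of $W(\q/\z)$ are hit by explicit rank-one rational forms $\langle a/\wp^k\rangle$ with a suitable choice of lattice. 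The second gap is your witness for ``metabolic forms map to zero'': the expression $H=(L\cap N)^\#/(L\cap N)$ is not a subgroup of $L^\#/L$, and since $b$ vanishes identically on the metabolizer $N$, the dual lattice of $L\cap N$ relative to the restricted form is not even defined. The correct witness is the image of $N\cap L^\#$ in $L^\#/L$, and verifying that it equals its own orthogonal complement takes a duality argument. In fact, once your lattice-independence step is in place you can bypass this entirely: a metabolic symmetric form over $\q$ (characteristic $\ne 2$) is hyperbolic, and the $\z$-span of a hyperbolic basis is a unimodular lattice, so $L^\#/L = 0$ on the nose and $\partial$ of a metabolic form vanishes trivially.
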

%

%%%%%%%%%%%%%%%%%%%%%%%%%%%%%%%%%%%%%%%%%%%%%%%%%%%%%%%%%%%%%%%%%%%%%%%%%%%%%%%%%%%%%%%%%%%%%%%%%%%%%%%%%%%%%%%%%%%%%%%%%%%%%%%%%%%%%%%%%%%%%%%%%%%%%%%%%%%%%%%%%%%%%%%%%%%%%%
\subsection{A computational algorithm}
%%%
%%%
To use Theorem \ref{main} effectively, it is important to be able to check the condition $\partial (V,b)=0$ for a rational form $(V,b)$. We describe in this section an explicit algorithm for doing so. It relies on an presentation of the Witt group $W(\q)$, an understanding of the Witt groups of the finite fields $\f_\wp$, and a relation of the latter to the structure of $W(\q)$.  

Let $\f$ be a field of characteristic not equal to $2$, and let $b:V\times V\to \f$ be a symmetric, non-degenerate form on the finite dimensional $\f$-vector space $V$. By a suitable choice of basis for $V$, the form $b$ can be diagonalized and becomes a direct sum of $1$-dimensional forms $\langle a_i\rangle$, $a_i\in \dot \f=\f-\{0\}$, $i=1,...,\dim _\f V$. Here $\langle a \rangle$ with $a\in \dot \f$ is short for the $1$-dimensional form $(\f, \langle a \rangle)$ determined by $\langle a\rangle (1,1) = a$. Thus $\{\langle a\rangle \, |\, a\in \dot \f\}$ is a generating set for the Witt group $W(\f)$. A set of relations R1--R3, giving a presentation of $W(\f)$ with the above generating set, is given by 
\begin{equation} \label{WittGroupRelations}
\begin{array}{crll}
R1: &\quad   \langle -a\rangle &\hspace{-2mm} = - \langle a\rangle,  \quad  & \forall a\in \f, \cr
R2: &\quad  \langle a \cdot b^2\rangle  &\hspace{-2mm} = \langle a \rangle, \quad & \forall a, b \in \dot \f , \cr
R3: &\quad  \langle a\rangle \oplus \langle b \rangle &\hspace{-2mm}  = \langle a+b\rangle \oplus \langle ab(a+b)\rangle, \quad & \forall a,b \in \f \text{ with } a+b\ne 0. \cr 
\end{array}
\end{equation}
The necessity of these relations is easy to establish, the fact that they are also sufficient can be found in \cite{Milnor}. While symmetric forms over fields of characteristic $2$ cannot always be diagonalized, they are algebraically concordant to diagonal forms, and so the same presentation remains valid over such fields as well. 

Let $\f_\wp$ denote the finite field of prime order $\wp$. The Witt groups $W(\f_\wp)$  are well understood and are given by 
\begin{equation} \label{FiniteFieldsWittGroups}
W(\f_\wp) \cong \left\{
\begin{array}{cl}
\mathbb Z_2 & \quad ; \quad \wp =2 , \cr
\mathbb Z_2\oplus \mathbb Z_2  & \quad ; \quad \wp \equiv 1 \, (\text{mod } 4) , \cr
\mathbb Z_4  & \quad ; \quad \wp \equiv 3 \, (\text{mod } 4). \cr
\end{array}
\right.
\end{equation}
Generators of $W(\f_\wp)$ are given by $\langle 1 \rangle$ in the case of $\wp=2$ or $\wp \equiv 3\, (\text{mod } 4)$, and by $\langle 1\rangle$, $\langle b\rangle$ in the case of $\wp \equiv 1\, (\text{mod } 4)$. Here $b$ is any non square in $\dot \f_\wp$, that is $b\in \dot \f _\wp - \dot \f ^2_\wp$. 

Our reason for mentioning the Witt groups of the fields $\f_\wp$ is that their direct sum is isomorphic, in a rather natural way, to the torsion subgroup of $W(\q)$. Namely, for a prime $\wp$ let us first define the function $\partial _\wp :W(\q)\to W(\f_\wp)$ by defining it on a generator $\langle \frac{a}{b} \cdot \wp^n \rangle $ with $a,b \in \dot\z$  relatively prime to $\wp$, and with $n\in \z$: 
\begin{equation*}
\partial _\wp \left( \left\langle \frac{a}{b} \cdot \wp ^n\right\rangle \right) = \left\{
\begin{array}{cl}
\langle \bar a \bar b \rangle & \quad ; \quad n \text{ is odd}, \cr & \cr
0 & \quad ; \quad n \text{ is even}.
\end{array}
\right.  
\end{equation*}
Here $\bar a, \bar b \in \f_\wp$ are the mod $\wp$ reductions of $a,b\in \mathbb Z$. The direct sum $\oplus _\wp \partial _\wp$, taken over all prime number $\wp$, gives a homomorphism from $W(\q)$ to $\oplus_\wp W(\f_\wp)$, the latter being isomorphic to $\mathbb Z_2^\infty \oplus \mathbb Z_4^\infty$, the torsion subgroup of $W(\q)$. 
%%%
%%%
\begin{theorem} \label{ConditionChecker}
The sequence 
$$0\to \z\stackrel{\iota}{\longrightarrow} W(\q) \stackrel{\oplus \partial _\wp}{\longrightarrow} \oplus _\wp W(\f_\wp) \to 0$$
is split exact, with $\oplus _\wp \partial _\wp$ restricting to an isomorphism from the torsion subgroup of $W(\q)$ to $\oplus_\wp W(\f_\wp)\cong \mathbb Z_2^\infty \oplus \mathbb Z_4^\infty$. For a rational form $(V,b)\in W(\q)$, the equality $\partial (V,b)=0$ holds if and only if $\partial _\wp (V,b)=0$ for every prime $\wp$. 
\end{theorem}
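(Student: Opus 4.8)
The plan is to reduce all three assertions to a single statement: that the displayed sequence is exact at $W(\q)$, i.e. that $\ker(\oplus_\wp\partial_\wp)=\imm\iota$. Before that can even be formulated, I must check that each $\partial_\wp$ is a well-defined homomorphism, i.e. that it respects the defining relations R1--R3 of \eqref{WittGroupRelations}. Relations R1 and R2 follow directly from the corresponding relations in $W(\f_\wp)$, together with the observation that multiplying $r\in\dot\q$ by $-1$ or by a square does not change the parity of the $\wp$-adic valuation $v_\wp(r)$. Relation R3 is the substantive case, handled by a finite case analysis on the triple $v_\wp(a)$, $v_\wp(b)$, $v_\wp(a+b)$. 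This map is precisely the classical second residue homomorphism attached to the $\wp$-adic valuation, so its well-definedness is standard; see \cite{Milnor,Lam}.

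With the maps in hand, I would first dispose of the easy structural ingredients. The map $\iota$ is injective because the signature homomorphism $\sigma$ of Theorem \ref{TheoremAboutExactSequence} satisfies $\sigma\circ\iota=\mathrm{id}_\z$; this same identity shows $\sigma$ is a splitting, so once exactness is established the sequence is automatically split. The inclusion $\imm\iota\subseteq\ker(\oplus_\wp\partial_\wp)$ is immediate: $\iota(n)$ is a sum of copies of $\langle\pm1\rangle$, and writing $\pm1=\tfrac{\pm1}{1}\wp^{0}$ one sees $\partial_\wp\langle\pm1\rangle=0$ since $n=0$ is even. Everything then hinges on the reverse inclusion and on surjectivity.

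The heart of the argument is thus proving $\ker(\oplus_\wp\partial_\wp)\subseteq\imm\iota$ together with surjectivity of $\oplus_\wp\partial_\wp$. Using R2 to clear denominators and square factors, I would represent an arbitrary class by a diagonal form $\langle a_1\rangle\oplus\cdots\oplus\langle a_k\rangle$ with each $a_i$ a squarefree integer, and then induct on the largest prime $\wp$ dividing some $a_i$. The residue $\partial_\wp$ reads off exactly the summands divisible by $\wp$ (up to squares in $\f_\wp$); its vanishing lets me use the chain relation R3 to combine those summands into forms not involving $\wp$, lowering the induction parameter. At the base of the induction only $\langle\pm1\rangle$ survive, and since $\langle1\rangle\oplus\langle-1\rangle$ is metabolic the class equals $\iota(n)$ for a suitable $n$. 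Surjectivity is obtained in the same spirit: $\langle\wp\rangle$ maps to the generator $\langle1\rangle\in W(\f_\wp)$ and to $0$ at every other prime, while for $\wp\equiv1\,(\mathrm{mod}\,4)$ a form $\langle u\wp\rangle$ with $u$ a prime that is a non-square mod $\wp$ supplies the second generator up to contributions at smaller primes, which are cleared by a downward induction. I expect this prime-by-prime induction --- in essence the classical computation of $W(\q)$ going back to Gauss and Milnor --- to be the main obstacle, the bookkeeping in the R3 reductions being where all the work lies.

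Finally I would deduce the two remaining claims with no further computation. For the torsion statement, $\sigma\circ\iota=\mathrm{id}$ gives $W(\q)=\imm\iota\oplus\ker\sigma$ with $\ker\sigma$ the torsion subgroup $T$ (the signature, being a homomorphism to $\z$, kills torsion and is injective on $\imm\iota$); since $\ker(\oplus_\wp\partial_\wp)=\imm\iota$ is a complement to $T$ and $\oplus_\wp\partial_\wp$ is onto, its restriction to $T$ is an isomorphism onto $\oplus_\wp W(\f_\wp)\cong\z_2^\infty\oplus\z_4^\infty$ by \eqref{FiniteFieldsWittGroups}. For the last claim, Theorem \ref{TheoremAboutExactSequence} gives $\ker\partial=\imm\iota$, and I have just shown $\ker(\oplus_\wp\partial_\wp)=\imm\iota$ as well; hence $\partial$ and $\oplus_\wp\partial_\wp$ vanish on exactly the same classes, so $\partial(V,b)=0$ if and only if $\partial_\wp(V,b)=0$ for every prime $\wp$.
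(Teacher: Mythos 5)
Your proposal is correct, but it is worth noting that the paper does not actually prove Theorem \ref{ConditionChecker} at all: it is quoted as a classical result of Gauss--Milnor--Tate, with the structural facts cited from \cite{Milnor, Lam, Alexander}. What you have written is a faithful reconstruction of that classical argument (Milnor's filtration-by-primes computation of $W(\q)$): well-definedness of the second residue homomorphisms $\partial_\wp$, injectivity and splitting via $\sigma\circ\iota=\mathrm{id}_\z$, the containment $\mathrm{Im}\,\iota\subseteq\ker(\oplus_\wp\partial_\wp)$, and then exactness plus surjectivity by induction on the largest prime dividing the (squarefree) diagonal entries. The one place where your text is a plan rather than a proof is exactly the step you flag yourself: showing that vanishing of $\partial_\wp$ permits the R3-reductions that eliminate the prime $\wp$, i.e.\ that relations in $W(\f_\wp)$ lift to relations in $W(\q)$ modulo forms supported on smaller primes. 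That is the genuine content of the classical theorem, and since you identify it precisely and it is standard, deferring it is reasonable in context. Two small refinements: in the torsion step, to conclude $\ker\sigma$ equals the torsion subgroup you need not just that $\sigma$ kills torsion but that $\ker\sigma$ is itself torsion, which follows because $\ker\sigma\cap\ker(\oplus_\wp\partial_\wp)=\ker\sigma\cap\mathrm{Im}\,\iota=0$ makes $\oplus_\wp\partial_\wp$ injective on $\ker\sigma$ with image in a torsion group --- so this step quietly uses the exactness you established, in the right logical order. And in the surjectivity step for $\wp\equiv1\ (\mathrm{mod}\ 4)$, your auxiliary prime $u$ can always be taken smaller than $\wp$ (the least quadratic non-residue mod $\wp$ is necessarily prime), which is what makes your ``clear contributions at smaller primes'' downward induction legitimate. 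Finally, your derivation of the last assertion --- $\partial(V,b)=0$ iff $\partial_\wp(V,b)=0$ for all $\wp$, via $\ker\partial=\mathrm{Im}\,\iota=\ker(\oplus_\wp\partial_\wp)$ from Theorem \ref{TheoremAboutExactSequence} --- is exactly the comparison of the two exact sequences that the paper's statement presupposes, and is the only part of the theorem with content beyond the cited classical computation.
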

%%%%%%%%%%%%%%%%%%%%%%%%%%%%%%%%%%%%%%%%%%%%%%%%%%%%%%%%%%%%%%%%%%%%%%%%%%%%%%%%%%%%%%%%%%%%%%%%%%%%%%%%%%%%%%%%%%%%%%%%%%%%%%%%%%%%%%%%%%%%%%%%%%%%%%%%%%%%%%%%%%%%%%%%%%%%%%
\subsection{Metabolizers of rational and linking forms}

We finish this section with a comparison of the metabolizer of a rational form $(V,b)$ and its associated linking form $\partial (V,b)$.
%%%
%%%
\begin{lemma} \label{AboutOrthogonalComplements}
Let $(V,b)$ be a rational form and let $\partial (V,b) = (L^\#/L,\lambda _b)$ for some integral lattice $L$ in $V$. If $N\subseteq L^\#/L$ is a metabolizer with $N=L_1/L$ for some subgroup $L_1\subseteq L^\#$, then $L_1$ is also an integral lattice in $V$ and $L_1^\# = L_1$. If additionally $\det b|_{L\times L}$ is odd, then $(L_1,b|_{L_1\times L_1})$ is an even form. 
\end{lemma}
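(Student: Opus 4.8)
The plan is to exploit the chain of inclusions $L \subseteq L_1 \subseteq L^\#$ that comes with the data and to translate the metabolizer condition $N = N^\perp$ into an identity of lattices. First I would check that $L_1$ is an integral lattice in $V$. As a subgroup of the finite-rank free abelian group $L^\#$ it is itself free abelian, and since it contains $L$ we have $L_1\otimes_\z\q \supseteq L\otimes_\z\q = V$, so $L_1\otimes_\z\q = V$ and $L_1$ has full rank. That $b$ takes integer values on $L_1$ is immediate from the metabolizer condition: from $N = N^\perp$ we get in particular $N\subseteq N^\perp$, so $\lambda_b(u+L,v+L)=0$, i.e. $b(u,v)\in\z$, for all $u,v\in L_1$. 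Hence $(L_1, b|_{L_1\times L_1})$ is a genuine integral symmetric form.

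For the identity $L_1^\# = L_1$ I would compute the orthogonal complement $N^\perp$ explicitly. Using that the dual is order-reversing together with the standard double-dual identity $(L^\#)^\# = L$ (visible from the matrix description of $L^\#$ given above), one obtains $L \subseteq L_1^\# \subseteq L^\#$, so that $L_1^\#/L$ is a subgroup of $L^\#/L$. For $v\in L^\#$, the class $v+L$ lies in $N^\perp$ precisely when $b(v,u)\in\z$ for all $u\in L_1$, that is, precisely when $v\in L_1^\#$; hence $N^\perp = L_1^\#/L$. The hypothesis $N = N^\perp$ now reads $L_1/L = L_1^\#/L$, and since both contain $L$ this forces $L_1 = L_1^\#$, so $(L_1,b)$ is unimodular.

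For evenness I would use the odd-determinant hypothesis through the order of the quotient. Since $|L^\#/L| = |\det b|_{L\times L}|$ is odd and $L\subseteq L_1 \subseteq L^\#$, the order $m := |L_1/L|$ divides this number and is therefore odd. Given $w\in L_1$, the class $w+L$ is killed by $m$, so $mw\in L$; because $L$ is even, $b(mw,mw) = m^2 b(w,w)$ is an even integer, while $b(w,w)\in\z$ as $L_1$ is unimodular. As $m^2$ is odd, the relation $m^2 b(w,w)\in 2\z$ forces $b(w,w)\in 2\z$, which is exactly evenness of $(L_1,b)$.

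The only substantive point is this last one, and I expect the main obstacle to be recognizing that the odd-determinant hypothesis is precisely what makes the index $[L_1:L]$ odd, so that evenness is transported from $L$ to $L_1$. Equivalently, one may phrase the argument $2$-locally: an odd index yields $L\otimes\z_{(2)} = L_1\otimes\z_{(2)}$, and evenness is a $2$-local property, so $(L_1,b)$ is even iff $(L,b)$ is. I would also be careful with the double-dual identity $(L^\#)^\# = L$ and the order-reversal of the dual operation, since these underpin the computation of $N^\perp$ in the second step.
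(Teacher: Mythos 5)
Your proof is correct and takes essentially the same route as the paper's: you convert the metabolizer condition into the identity $N^\perp = L_1^\#/L$ (the paper proves the two inclusions $L_1\subseteq L_1^\#$ and $L_1^\#\subseteq L_1$ separately, but it is the same computation), and you transport evenness from $L$ to $L_1$ by scaling an element of $L_1$ into $L$ by an odd integer. The only cosmetic variation is that you scale by the index $[L_1:L]$, odd because it divides $|L^\#/L|=|\det b|$, whereas the paper scales by $\det b$ itself via Remark \ref{RemarkAboutDeterminant}; note that both your argument and the paper's tacitly use that $(L,b|_{L\times L})$ is even, which holds in the context where the lemma is applied.
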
 
%%%
%%%
\begin{proof}
Note that by virtue of $N$ being a metabolizer, we obtain that $\lambda _b(u+L,v+L)$ is an integer for any choice of $u,v\in L_1$. Thus $(L_1,b)$ is an integral lattice in $V$ and so its dual lattice $L_1^\#$ is well defined, and given by 
$$L_1^\# = \{ v\in L_1\, |\, b(u,v)\in \z \text{ for all } u\in L_1\}.$$
Clearly $L_1$ is contained in $L_1^\#$. To obtain the opposite inclusion, let $v\in L_1^\#$. Then $b(u,v)\in \z$ for all $u\in L_1$ so that $\lambda _b(u+L,v+L) = 0$ for all $u+L\in L_1/L=N$. Thus, $v+L\in N^\perp = N = L_1/L$ showing that $v\in L_1$. The equality $L_1=L_1^\#$ follows. 

If $\det b|_{L\times L} = m$ for some odd $m\in \mathbb N$, then $m\cdot u \in L$ for any choice of $u\in L_1\subset L^\#$, see Remark \ref{RemarkAboutDeterminant}. Thus $m^2\cdot b(u,u) = b(m\cdot u, m\cdot u) \in 2\mathbb Z$ and since $m$ is odd, then $b(u,u)\in 2\mathbb Z$ for every $u\in L_1$. Thus $(L_1,b|_{L_1\times L_1})$ is an even integral form. 
\end{proof}
%%%%%%%%%%%%%%%%%%%%%%%%%%%%%%%%%%%%%%%%%%%%%%%%%%%%%%%%%%%%%%%%%%%%%%%%%%%%%%%%%%%%%%%%%%%%%%%%%%%%%%%%%%%%%%%%%%%%%%%%%%%%%%%%%%%%%%%%%%%%%%%%%%%%%%%%%%%%%%%%%%%%%%%%%%%%%%%%%%%%%%%%%%%%%%%%%%%%%%%%%%%%%%%%%%%%%%%%%%%%%%%%%%%%%%%%%%%%%%%%%%%%%%%%%%%%%%%%%%%%%%%%%%%%%%%%%%%%%%%%%%%%%%%%%%%%%%%%%%%%%%%%%%%%%%%%%%%%%%%%%%%%%%%%%%%%%%%%%%%%%%%%%%%%%%%%%%%%%
\section{Proofs}
%%%
%%%
\subsection{The proof of Theorem \ref{main}}
%%%
%%%
Let $(V,b)$ be an even, symmetric, rational form and let $L\subseteq V$ be an integral lattice, such that $\det b|_{L\times L}$ is odd. Let $(L^\#/L,\lambda _b)=\partial (V,b)$ be the associated linking form and assume that $(L^\#/L,\lambda _b)= 0\in W(\q/\z)$. Let $N\subseteq L^\#/L$ be a metabolizer for $(L^\#/L,\lambda _b)$ and let $L_1\subseteq L^\#$ be a subgroup such that $N=L_1/L$. It follows from Lemma \ref{AboutOrthogonalComplements} that $L_1\subseteq V$ is itself an even integral lattice and that $L_1=L_1^\#$. Let $|L^\#/L| = m^2=|\det b\, |$ so that $|N|=m=\sqrt{|\det b\,|}$. 

Following Appendix 4 in \cite{Milnor} (see also \cite{BaillyCabral, BargeLannesLatourVogel}), we introduce the {\em Gauss sum $G(b)$} for an even, symmetric form $b$ on an integral lattice $L\subseteq V$, with the formula:
$$G(b) = \sum _{u\in (L^\#/L)} e^{ \pi i \cdot b(u,u)}\in \mathbb C.$$
The following lemma and theorem can be found in Appendix 4 in \cite{Milnor}.
%%%
%%%
\begin{lemma} \label{LemmaFromMilnor}
Let $L$ be a sublattice of $L_1$ of index $m$, then $G(L) = m\cdot G(L_1)$.
\end{lemma}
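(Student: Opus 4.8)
The plan is to recognize $G(L)$ as the Gauss sum of a quadratic refinement of the linking form on the discriminant group $L^\#/L$, and then to reduce it along the isotropic subgroup $H := L_1/L$. First I would record the chain of lattices $L \subseteq L_1 \subseteq L_1^\# \subseteq L^\#$, where the middle inclusion uses that $b$ is integral on $L_1$; since $[L_1:L]=m$ we have $|H| = m$. Because $b$ is even on $L_1$ (and therefore on its sublattice $L$), the function $q(u) := b(u,u) \in \q/2\z$ is well defined on $L^\#/L$: for $\ell \in L$ one has $b(u+\ell,u+\ell) = b(u,u) + 2b(u,\ell) + b(\ell,\ell) \equiv b(u,u) \pmod{2\z}$, using $b(u,\ell)\in\z$ and $b(\ell,\ell)\in 2\z$. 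Moreover $q$ refines the linking form, in the sense that $q(x+y) - q(x) - q(y) = 2\lambda_b(x,y)$ in $\q/2\z$. Thus $G(L) = \sum_{x \in L^\#/L} e^{\pi i q(x)}$ is precisely the Gauss sum of $q$.

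Next I would verify the two facts that make $H$ a suitable isotropic subgroup: evenness of $b$ on $L_1$ gives $q|_H \equiv 0$, and the computation $v+L \in H^\perp \iff b(v,\ell_1)\in\z \text{ for all } \ell_1\in L_1 \iff v\in L_1^\#$ (exactly as in the proof of Lemma \ref{AboutOrthogonalComplements}) identifies $H^\perp = L_1^\#/L$, whence $H^\perp/H \cong L_1^\#/L_1$ carries the linking form and the quadratic refinement of $L_1$ itself. The heart of the argument, and the step I expect to be the main obstacle, is the isotropic-reduction identity $\sum_{x\in L^\#/L} e^{\pi i q(x)} = |H| \sum_{y \in H^\perp/H} e^{\pi i \bar q(y)}$. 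I would prove it by character orthogonality: for a fixed $x$, using $q|_H\equiv 0$ and the refinement identity, $\sum_{h\in H} e^{\pi i q(x+h)} = e^{\pi i q(x)} \sum_{h\in H} e^{2\pi i \lambda_b(x,h)}$, and the inner sum equals $|H|\,e^{\pi i q(x)}$ when $x \in H^\perp$ and $0$ otherwise. Summing over all $x \in L^\#/L$, and noting that $(x,h)\mapsto x+h$ covers each element of $L^\#/L$ exactly $|H|$ times, yields $|H|\, G(L) = |H| \sum_{x \in H^\perp} e^{\pi i q(x)}$, hence $G(L) = \sum_{x\in H^\perp} e^{\pi i q(x)}$. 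Since $q$ is constant on $H$-cosets inside $H^\perp$ it descends to $\bar q$ on $H^\perp/H$, and the last sum equals $|H|\sum_{y\in H^\perp/H} e^{\pi i \bar q(y)}$.

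Finally I would translate back. Under the identification $H^\perp/H \cong L_1^\#/L_1$ the descended form $\bar q$ is exactly the one defining the Gauss sum of $L_1$, so $\sum_{y\in H^\perp/H} e^{\pi i \bar q(y)} = G(L_1)$, while $|H| = m$. Combining the displays gives $G(L) = m\cdot G(L_1)$, as claimed. Beyond the orthogonality computation, the only points needing care are the well-definedness of $q$ modulo $2\z$ and the multiplicity bookkeeping in the double sum; both are routine once the refinement identity $q(x+y)-q(x)-q(y) = 2\lambda_b(x,y)$ has been established.
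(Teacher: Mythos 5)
Your proof is correct. Note that the paper does not actually prove Lemma \ref{LemmaFromMilnor}: it quotes it from Appendix 4 of \cite{Milnor}, so there is no in-paper argument to compare against, and what you have written is essentially the classical proof from that appendix. Your route --- viewing $G(L)$ as the Gauss sum of the quadratic refinement $q(x)=b(x,x)\in\q/2\z$ on $L^\#/L$, identifying $H^\perp=L_1^\#/L$ for $H=L_1/L$ exactly as in Lemma \ref{AboutOrthogonalComplements}, annihilating the cosets outside $H^\perp$ by character orthogonality, and then descending along $H^\perp/H\cong L_1^\#/L_1$ --- is the standard one, and the delicate points (well-definedness of $q$ modulo $2\z$, $q|_H\equiv 0$, the $|H|$-fold multiplicity in the double sum, and the constancy of $q$ on $H$-cosets inside $H^\perp$) are all handled correctly. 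One remark: as stated in the paper the lemma suppresses the hypothesis that $b$ is integral and even on $L_1$, which you rightly make explicit; it is needed both for $G(L_1)$ to be defined and for $q|_H\equiv 0$, and in the paper's application it is supplied by Lemma \ref{AboutOrthogonalComplements} under the odd-determinant assumption. (A cosmetic slip only: the inner character sum $\sum_{h\in H}e^{2\pi i\lambda_b(x,h)}$ equals $|H|$, not $|H|e^{\pi i q(x)}$; the latter is the whole expression including the prefactor, as your subsequent display uses correctly.)
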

\begin{theorem} \label{TheoremFromMilnor}
Let $(L,b)$ be an even, symmetric, integral lattice. Then 
$$ G(L) = \sqrt{|\det b|} \cdot  e^{2\pi i \cdot  \frac{\sigma(b)}{8}}.$$ 
\end{theorem}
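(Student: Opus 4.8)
The plan is to treat the modulus and the phase of $G(b)$ separately. For the modulus I would compute $|G(b)|^2$ directly. Writing $A=L^\#/L$, I first record that $u\mapsto e^{\pi i\,b(u,u)}$ is well defined on $A$: if $u\in L^\#$ and $w\in L$ then $b(u+w,u+w)=b(u,u)+2b(u,w)+b(w,w)$, with $b(u,w)\in\z$ by the definition of $L^\#$ and $b(w,w)\in 2\z$ by evenness, so $b(u,u)$ is well defined modulo $2\z$. Then I expand
$$|G(b)|^2=\sum_{u,v\in A}e^{\pi i(b(u,u)-b(v,v))}$$
and substitute $v=u+w$. Since $b(u,u)-b(v,v)=-2b(u,w)-b(w,w)$, the $u$-sum becomes $\sum_{u\in A}e^{-2\pi i\,\lambda_b(u,w)}$, a character sum on $A$ that equals $|A|$ when $w=0$ and vanishes otherwise, by the non-degeneracy of $\lambda_b$. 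Hence $|G(b)|^2=|A|=|\det b|$, giving $|G(b)|=\sqrt{|\det b|}$. This is the routine half.

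The phase carries the content. I would first note that both sides of the asserted identity are multiplicative under orthogonal direct sums: the discriminant group of $b_1\oplus b_2$ is $A_1\oplus A_2$, the exponent $b(u,u)$ is additive on the summands, and $\det$ multiplies while $\sigma$ adds, so $G(b)$, $\sqrt{|\det b|}$ and $e^{2\pi i\sigma(b)/8}$ are all multiplicative. Moreover $G(b)=\sum_{u\in A}e^{\pi i\,q(u)}$ depends only on the discriminant quadratic form $q\colon A\to\q/2\z$, $q(u)=b(u,u)$. It therefore suffices to evaluate the normalized sum $G(b)/\sqrt{|\det b|}$ on a generating family of finite quadratic forms and to match the resulting root of unity with $e^{2\pi i\sigma/8}$ for one lattice realizing each generator.

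Concretely, I would split $A$ into its $\wp$-primary parts $A=\bigoplus_\wp A_\wp$, which are mutually orthogonal for the $\q/\z$-valued form, so that $G$ factors as a product over primes. On each $A_\wp$ the sum is a classical quadratic Gauss sum of the shape $\sum e^{2\pi i\,ax^2/\wp^k}$, whose value — including its phase factor (a Jacobi symbol $\left(\tfrac{a}{\wp^k}\right)$) together with the $\sqrt{\pm1},\sqrt{\pm i}$ normalization — is delivered by quadratic reciprocity. Reassembling the prime contributions produces a global $8$th root of unity, and comparing with a diagonal lattice (or, at $\wp=2$, a standard even lattice) realizing the given $\wp$-primary form identifies that root with $e^{2\pi i\sigma/8}$. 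The splitting $\oplus_\wp W(\f_\wp)$ of the torsion of $W(\q)$ from Theorem \ref{ConditionChecker} is exactly the bookkeeping that organizes this prime-by-prime matching.

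The main obstacle is pinning the phase down to the correct residue modulo $8$, rather than merely showing it is some root of unity: this forces the precise evaluation of the quadratic Gauss sums with their exact argument, and the $2$-primary part is the delicate case, since it is what detects $\sigma\bmod 8$ rather than only $\sigma\bmod 4$. (In the application to Theorem \ref{main} the determinant is odd, so $A_2=0$ and this case is absent, but the theorem as stated requires it.) Alternatively one could try to avoid the explicit sums by using Lemma \ref{LemmaFromMilnor} to pass to an even overlattice and inducting on the order of the discriminant group, but controlling the phase increment under such an extension still reduces to the same local Gauss-sum computation.
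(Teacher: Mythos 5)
Your modulus computation is correct and complete, and your observation that $G(b)$ depends only on the discriminant quadratic form $q(u)=b(u,u)\in\q/2\z$ is right. The genuine gap is in the final matching step of the phase argument. Evaluating $\varphi(q)=G(b)/\sqrt{|\det b|}$ on a generating family of finite quadratic forms and checking it against \emph{one} lattice realizing each generator proves the identity only for those chosen lattices and their orthogonal sums. For an arbitrary even lattice $L$ whose discriminant form is $\oplus_i q_i$, your computation yields $G(L)/\sqrt{|\det b|}=\prod_i e^{2\pi i\,\sigma(L_i)/8}$ for the realizing lattices $L_i$, and to conclude $=e^{2\pi i\,\sigma(L)/8}$ you need $\sigma(L)\equiv\sum_i\sigma(L_i)\ (\text{mod } 8)$ whenever the discriminant forms agree --- that is, you need $\sigma\bmod 8$ to be an invariant of $q$. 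Since the left-hand side of Milgram's formula visibly depends only on $q$, that statement is \emph{equivalent} to the theorem you are proving, so as written the plan is circular. It can be repaired (glue $L\oplus(-L')$ along a metabolizer of the metabolic form $q\oplus(-q)$ to produce an even unimodular lattice and invoke Theorem \ref{ExampleOfUnimodularEvenForm}, taking care to use a proof of that theorem, e.g.\ via characteristic vectors, that does not itself rest on Gauss sums), but no such argument appears in your sketch. Two secondary gaps: at $\wp=2$ the primary component does \emph{not} split into cyclic pieces, so ``classical Gauss sums of the shape $\sum e^{2\pi i\,ax^2/\wp^k}$'' is insufficient --- you must also handle the rank-two indecomposables with Gram matrices $2^{-k}\left[\begin{smallmatrix}0&1\\1&0\end{smallmatrix}\right]$ and $2^{-k}\left[\begin{smallmatrix}2&1\\1&2\end{smallmatrix}\right]$; and the claim that every finite quadratic form is realized by an even lattice is true but itself requires proof.

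For comparison: the paper does not reprove this theorem but cites Appendix 4 of \cite{Milnor}, and the argument there pivots on exactly the lemma you set aside, Lemma \ref{LemmaFromMilnor}. Since any two even lattices in $(V,b)$ share a common finite-index sublattice, that lemma shows the normalized sum $G(L)/\sqrt{|\det b|}$ is an invariant of the rational form; it is multiplicative and equals $1$ on metabolic rational forms (which contain even unimodular lattices such as $\left[\begin{smallmatrix}0&1\\1&0\end{smallmatrix}\right]$), hence defines a homomorphism $W(\q)\to U(1)$. Comparing it with the homomorphism $e^{2\pi i\,\sigma/8}$ then requires only the rank-one generators $\langle a\rangle$, i.e.\ the single classical evaluation $\sum_{j \bmod 2a}e^{\pi i j^2/(2a)}=\sqrt{2a}\,e^{\pi i/4}$ for $a>0$ (and its conjugate for $a<0$). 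No $\wp$-primary classification, no $2$-adic case analysis, and no mod-$8$ matching problem arises, because lattice-independence is settled \emph{before} any computation. So your parenthetical dismissal of the Lemma \ref{LemmaFromMilnor} route as ``reducing to the same local Gauss-sum computation'' has it backwards: that route is precisely what eliminates both the delicate $2$-primary bookkeeping and the circularity identified above.
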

Lemma \ref{LemmaFromMilnor} shows that $G(L) = m\cdot G(L_1)$ while Lemma \ref{AboutOrthogonalComplements} implies that 
$$G(L_1)= \sum _{u\in (L_1^\#/L_1)} e^{\pi i b(u,u)} = \sum _{u\in (L_1/L_1)}e^{\pi i \cdot 0 } = 1.$$ 
Using these two relations in Theorem \ref{TheoremFromMilnor} gives
$$ m \cdot e^{2\pi i \cdot \frac{\sigma(b)}{8}}  = G(L)= m\cdot G(L_1) = m.$$
implying that $e^{2\pi i \cdot \frac{\sigma(b)}{8}}=1$. It follows that $\frac{\sigma (b)}{8}$ is an integer, as claimed in Theorem \ref{main}.
%%%%%%%%%%%%%%%%%%%%%%%%%%%%%%%%%%%%%%%%%%%%%%%%%%%%%%%%%%%%%%%%%%%%%%%%%%%%%%%%%%%%%%%%%%%%%%%%%%%%%%%%%%%%%%%%%%%%%%%%%%%%%%%%%%%%%%%%%%%%%%%%%%%%%%%%%%%%
\subsection{The proof of Proposition \ref{SignatureOf4nManifolds}}
%%%
%%%
Proposition \ref{SignatureOf4nManifolds} follows at once from Theorem \ref{main} with the help of the following result \cite{Alexander, BargeLannesLatourVogel}: 
%%%
%%%
\begin{theorem}
Let $X$ be a $4n$-dimensional, oriented, compact manifold with boundary the rational homology $(4n-1)$-sphere $Y$. Then the map $\partial :W(\q)\to W(\q/\z)$ from equation \eqref{TheExactSequence} in Theorem \ref{TheoremAboutExactSequence}, sends the rational Witt class of $\left( H_{2n}(X;\q),Q_X\right)$ to minus the Witt class  of the linking form $\left( H^{2n}(Y;\z),\lambda _Y\right)$. 
\end{theorem}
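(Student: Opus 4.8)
The plan is to compute $\partial(H_{2n}(X;\q), Q_X)$ directly from the definition of $\partial$ in Theorem \ref{TheoremAboutExactSequence}, using Poincaré–Lefschetz duality on $X$ together with the long exact sequence of the pair $(X,Y)$, and then to match the resulting linking form against $\lambda_Y$ via Poincaré duality on $Y$. Throughout I write $V = H_{2n}(X;\q)$ equipped with the form $b = Q_X\otimes \text{id}$. Nondegeneracy of $b$ over $\q$ follows from the hypothesis that $Y$ is a rational homology $(4n-1)$-sphere: for $n\ge 1$ one has $H_{2n}(Y;\q)=H_{2n-1}(Y;\q)=0$, so the map $j\colon H_{2n}(X;\q)\to H_{2n}(X,Y;\q)$ is an isomorphism. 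As the integral lattice I would take $L = H_{2n}(X;\z)/\text{Tor}$, on which $Q_X$ is defined.

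First I would identify the dual lattice $L^\#$. The adjoint of $Q_X$ sends a homology class to its intersection functional; under the Poincaré–Lefschetz duality isomorphism $H_{2n}(X,Y;\z)\cong H^{2n}(X;\z)$ and the universal coefficient theorem, this adjoint is precisely $j$ followed by duality. Tracing this identifies $L^\#$ with the free part of $H_{2n}(X,Y;\z)$, embedded in $V$ via the inverse of the rational isomorphism $j\otimes\q$. With this in hand, $L^\#/L$ becomes the cokernel of $j$ on free parts, which the exact sequence
$$H_{2n}(X;\z)\xrightarrow{\,j\,} H_{2n}(X,Y;\z)\xrightarrow{\,\partial_*\,} H_{2n-1}(Y;\z)\to H_{2n-1}(X;\z)$$
identifies with $\imm(\partial_*)\subseteq H_{2n-1}(Y;\z)$. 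Since $Y$ is a rational homology sphere, $H_{2n-1}(Y;\z)$ is finite, so this image is everything and $L^\#/L\cong H_{2n-1}(Y;\z)$. Poincaré duality on $Y$ then supplies $H_{2n-1}(Y;\z)\cong H^{2n}(Y;\z)$, the group carrying $\lambda_Y$.

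The last and hardest step is to verify that, under the composite identification $L^\#/L\cong H^{2n}(Y;\z)$, the induced linking form $\lambda_b(u+L,v+L)=b(u,v)+\z$ agrees with $-\lambda_Y$. This is a diagram chase: given $u,v\in L^\#$, one lifts them to relative classes, applies $\partial_*$ to land in $H_{2n-1}(Y;\z)$, and compares the rational value $b(u,v)$ against the linking number extracted from the cochain-level definition of $\lambda_Y$ (choosing $\sigma$ with $\delta\sigma=m\beta$ and evaluating $\tfrac{1}{m}(\alpha\smile\sigma)[Y]$). Compatibility of Poincaré–Lefschetz duality on $(X,Y)$ with Poincaré duality on $Y$ forces the two pairings to coincide up to sign, and the sign emerges from the boundary-orientation convention $\partial X = Y$ that is built into the connecting homomorphism $\partial_*$. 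I expect this bookkeeping of nested duality isomorphisms, and pinning down the resulting sign, to be the main obstacle; the group-level identifications are routine once the exact sequence and the two duality isomorphisms are assembled.
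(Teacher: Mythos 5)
Your reduction breaks at the step ``Since $Y$ is a rational homology sphere, $H_{2n-1}(Y;\z)$ is finite, so this image is everything.'' Finiteness gives nothing here: exactness identifies $\imm (\partial_*)$ with $\ker \left( i_*:H_{2n-1}(Y;\z)\to H_{2n-1}(X;\z)\right)$, and $i_*$ need not vanish, since $H_{2n-1}(X;\z)$ may contain torsion. Concretely, take $n=1$ and let $X$ be a rational homology $4$-ball with $\partial X=Y=L(9,2)$ (such $X$ exists, by Lisca's classification \cite{Lisca} of lens spaces bounding rational balls). Then $H_2(X;\q)=0$, so $V=0$, $L=L^{\#}=0$ and $L^{\#}/L=0$, while $H^{2}(Y;\z)\cong \z_9$; moreover $\ker i_*$ is a metabolizer of $\lambda_Y$, hence has order $3$, so it is a proper subgroup of $H_1(Y;\z)$. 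Your chain of identifications would therefore produce an isomorphism $0\cong \z_9$. The theorem survives in this example only because $\lambda_Y$ is metabolic (metabolizer $3\z_9$), so both Witt classes vanish: the statement is genuinely an equality in $W(\q/\z)$ and cannot be proved, as you propose, by exhibiting an isomorphism of linking forms $(L^{\#}/L,\lambda_b)\cong (H^{2n}(Y;\z),-\lambda_Y)$ and then chasing signs. A secondary problem in the same step: when $H_{2n}(X,Y;\z)$ has torsion, the cokernel of $j$ on free parts need not equal $\imm(\partial_*)$ either --- in the example above the former is $0$ while the latter is $\z_3$.

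What your outline can establish, once the duality bookkeeping is done, is that $(L^{\#}/L,\lambda_b)$ is minus the nondegenerate form induced by $\lambda_Y$ on (a quotient of) $\ker i_*$. The missing --- and main --- idea is the Witt-theoretic step: one must show this smaller form is Witt equivalent to $(H^{2n}(Y;\z),\lambda_Y)$ itself, for instance by sublagrangian reduction (if $K\subseteq G$ is isotropic for a linking form $\lambda$, then $\lambda$ and the induced nondegenerate form on $K^{\perp}/K$ represent the same class in $W(\q/\z)$), or equivalently by producing a metabolizer for $(L^{\#}/L,\lambda_b)\oplus (H^{2n}(Y;\z),\lambda_Y)$ out of the image of relative classes of $(X,Y)$. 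For comparison: the paper does not prove this theorem at all but quotes it from \cite{Alexander, BargeLannesLatourVogel}, where precisely such a metabolizer argument is carried out. Your argument as written does become correct under the extra hypothesis that $H_{2n-1}(X;\z)$ is torsion-free: then $i_*=0$, so $\partial_*$ is surjective, and $H_{2n}(X,Y;\z)\cong H^{2n}(X;\z)$ is torsion-free as well, so all your identifications go through and only the sign comparison remains. In general, however, the surjectivity you assert fails, and with it the entire strategy of proving a form isomorphism rather than a Witt equivalence.
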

%%%
%%%
If $Y$ is assumed metabolic, as in Proposition \ref{SignatureOf4nManifolds},  then the Witt class of $\left( H^{2n}(Y;\z),\lambda _Y\right)$ is zero in $(W\q/\z)$ and hence $\partial \left( H_{2n}(X;\q),Q_X\right) = 0$. The determinant of $Q_X$ is given by $|H^{2n}(Y;\z)|$, establishing the applicability of Theorem \ref{main} to $Q_X$. 

If $Y$ is the $3$-manifold resulting from $p^2/q$-framed Dehn surgery on a  knot $K\subset S^3$ with $p$ odd, then $H^2(Y;\z)\cong \mathbb Z_{p^2}$ and the linking form of $Y$ is given by (see Lemma 2 in \cite{Lickorish}): 
$$\lambda _Y(a,b) = \frac{abq}{p^2}, \quad \quad a,b\in \mathbb Z_{p^2}.$$  
Thus a metabolizer is given by $N=p\cdot H^2(Y;\z)$. 

If $Y$ bounds a $4$-manifold $X$ with even intersection form $Q_X$, and $Y$ also bounds a rational homology ball $B$, then $X\cup _YB$ is a closed $4$-manifold, with unimodular intersection form $Q_{X\cup _Y B}$. If $Q_{X\cup _Y B}$ is also even  then it has signature a multiple of $8$ and in this case Proposition \ref{SignatureOf4nManifolds} follows from Theorem \ref{ExampleOfUnimodularEvenForm} since $\sigma (X) = \sigma (X\cup _YB)$.  The fact that the lens spaces from Example \ref{ExampleOfLensSpaces} do not bound rational homology $4$-balls, follows from Lisca's work \cite{Lisca}.
%%%%%%%%%%%%%%%%%%%%%%%%%%%%%%%%%%%%%%%%%%%%%%%%%%%%%%%%%%%%%%%%%%%%%%%%%%%%%%%%%%%%%%%%%%%%%%%%%%%%%%%%%%%%%%%%%%%%%%%%%%%%%%%%%%%%%%%%%%%%%%%%%%%%%%%%%%%%
\subsection{The proof of Proposition \ref{CorollaryAboutKnotSignatures}}
%%%
%%%
Proposition \ref{CorollaryAboutKnotSignatures} follows readily from Theorem \ref{main} by relying on standard facts about the Seifert form $\xi = \xi_{K,\Sigma}$ of a knot $K$, associated to a Seifert surface $\Sigma$, both of which can be found in \cite{Rolfsen}:
\begin{itemize}
\item[(i)] $\xi +\bar \xi$ is symmetric and non-degenerate. 
\item[(ii)] $\xi +\bar \xi$ is even (as is obvious from $(\xi + \bar \xi)(u,u) = 2 \xi (u,u)$). 
\item[(iii)] The determinant of $\xi +\bar \xi$ is always odd. 
\end{itemize}

Concerning Example \ref{LowCrossingKnots}, the quantity $\partial (\xi +\bar\xi)$ for the knots in said example can easily (with computer help) be evaluated explicitly using the algorithm from Theorem \ref{ConditionChecker}. For instance if $K=9_1$, then $\xi+\bar\xi$ can be represented by the matrix $A$ (see \cite{KnotInfo}), which becomes diagonal under a basis change with transition matrix $P$: 
$$P\cdot A\cdot P^\tau = Diag\left( -2,\, -\frac{3}{2},\, -\frac{4}{3},\, -\frac{5}{4},\, -\frac{6}{5},\, -\frac{7}{6}, \,-\frac{8}{7},\, -\frac{9}{8} \right), \quad \text{ with } $$
%%%
%%%
$$P = 
 {\tiny 
\left[
\begin{array}{rrrrrrrr}
1 & 0 &0 &0 &0 &0 &0 & 0 \cr
-\frac{1}{2} & 1 &0 &0 &0 &0 &0 & 0 \cr
-\frac{1}{3} & -\frac{1}{3} & 1 &0 &0 &0 &0 & 0 \cr
-\frac{1}{4} & -\frac{1}{4} & -\frac{1}{4} & 1 &0 &0 &0 & 0 \cr
-\frac{1}{5} & -\frac{1}{5} & -\frac{1}{5} &-\frac{1}{5} & 1 &0 &0 & 0 \cr
-\frac{1}{6} & -\frac{1}{6} & -\frac{1}{6} &-\frac{1}{6} &-\frac{1}{6} &1 &0 & 0 \cr
-\frac{1}{7} & -\frac{1}{7} & -\frac{1}{7} &-\frac{1}{7} &-\frac{1}{7} &-\frac{1}{7} &1 & 0 \cr
-\frac{1}{8} & -\frac{1}{8} & -\frac{1}{8} &-\frac{1}{8} &-\frac{1}{8} &-\frac{1}{8} &-\frac{1}{8} & 1 
\end{array}
\right] \quad \text{ \normalsize and } \quad 
A= -\left[
\begin{array}{rrrrrrrr}
2 & 1 &1 &1 &1 &1 &1 & 1 \cr
1 & 2 &1 &1 &1 &1 &1 & 1 \cr
1 & 1 &2 &1 &1 &1 &1 & 1 \cr
1 & 1 &1 &2 &1 &1 &1 & 1 \cr
1 & 1 &1 &1 &2 &1 &1 & 1 \cr
1 & 1 &1 &1 &1 &2 &1 & 1 \cr
1 & 1 &1 &1 &1 &1 &2 & 1 \cr
1 & 1 &1 &1 &1 &1 &1 & 2 \cr
\end{array}
\right].
}
$$
The rational Witt class $\varphi(9_1)$ of the knot $9_1$ is then given by 
$$\varphi(9_1) = \langle -2\rangle \oplus \left\langle -\frac{3}{2}\right\rangle \oplus  \left\langle -\frac{4}{3}\right\rangle \oplus  \left\langle -\frac{5}{4}\right\rangle \oplus  \left\langle -\frac{6}{5}\right\rangle \oplus  \left\langle -\frac{7}{6}\right\rangle \oplus  \left\langle -\frac{8}{7}\right\rangle \oplus  \left\langle -\frac{9}{8}\right\rangle.  $$
The only odd primes $\wp$ that occur as factors on the right-hand side above are $\wp = 3,5,7$, showing that $\partial _\wp (\varphi(9_1)) = 0$ if $\wp\ne 3,5,7$. For the latter three primes one obtains 
\begin{align*} 
\partial _3 (\varphi (9_1)) & = 
0\oplus \langle -2\rangle\oplus \langle -4\rangle\oplus 0 \oplus\langle -10\rangle\oplus \langle -14\rangle\oplus 0 \oplus 0,  \cr
& = \langle 1\rangle\oplus \langle -1\rangle \oplus\langle -1\rangle\oplus \langle 1\rangle, \cr
& = 0, \cr 
& \cr
\partial _5 (\varphi (9_1)) & = 0\oplus 0\oplus 0\oplus \langle -4\rangle\oplus \langle -6\rangle\oplus 0 \oplus 0 \oplus 0,  \cr
& =  \langle 1\rangle\oplus \langle -1\rangle, \cr
& = 0, \cr 
& \cr
\partial _7 (\varphi (9_1)) & = 
0\oplus 0\oplus 0\oplus 0\oplus 0\oplus  \langle -6\rangle\oplus \langle -8\rangle\oplus 0,  \cr
& =  \langle 1\rangle\oplus \langle -1\rangle, \cr
& = 0.
\end{align*}
The computations for the other knots in Example \ref{LowCrossingKnots} are similar. 
%%%%%%%%%%%%%%%%%%%%%%%%%%%%%%%%%%%%%%%%%%%%%%%%%%%%%%%%%%%%%%%%%%%%%%%%%%%%%%%%%%%%%%%%%%%%%%%%%%%%%%%%%%%%%%%%%%%%%%%%%%%%%%%%%%%%%%%%%%%%%%%%%%%%%%%%%%%%
\subsection{The proof of Proposition \ref{DiophantineTheorem}}
%%%
%%%
For odd integers $p,q$ and an even integer $r$, consider the pretzel knot $P(p,q,r)$ is defined as in Figure \ref{pic1}. The determinant of $P(p,q,r)$, as computed with respect to a specific Seifert surface, is $\det P(p,q,r) = pq+pr+qr$ (see \cite{jabuka1}) and so the Diophantine equation \eqref{DiophantineEquation} can be restated as $\det P(p,q,r) = -m^2$. The rational Witt class of $P(p,q,r)$, denoted by $\varphi (P(p,q,r))$, is the rational Witt class of $\xi+\bar \xi$, and has been computed in \cite{jabuka2, jabuka1}. Up to summing with terms of the form $\langle \pm 1\rangle$, it is given by 
\begin{equation} \label{WittClassOfPretzelKnot}
\varphi(P(p,q,r)) = \langle p \rangle \oplus \langle q \rangle \oplus \langle r \rangle \oplus \langle pqr \rangle. 
\end{equation}
As the summands $\langle \pm 1\rangle$ lie in the kernel of the map $\partial :W(\q)\to W(\q/\z)$, then can and shall be ignored. Assuming the equation $pq+pr+qr=-m^2$ for some odd integer $m\in \mathbb N$, we need to show that $\partial _\wp $ gives zero when applied to the expression on the right-hand side of equation \eqref{WittClassOfPretzelKnot}, for every choice of an odd prime $\wp$ (compare with Theorem \ref{ConditionChecker}). The prime $\wp=2$ has no bearing here since $\det K$ is always odd, and so $\partial _2(\varphi (K))=0$ for any knot $K$.

We verify this in a case by case analysis. Let us write $p=\wp^a \alpha$, $q=\wp ^b \beta$ and $r=\wp ^c \gamma$ with each of $\alpha$, $\beta$ and $\gamma$ coprime with $\wp$, and with $a,b,c$ non-negative integers.  Similarly, write $m=\wp ^s \sigma$ so that the equation $pq+pr=-(m^2+qr)$ becomes 
\begin{equation} \label{CaseTwoEquation}
\wp ^{a}\alpha (\wp^b\beta +\wp^{c}\gamma)  = -(\wp^{2s} \sigma ^2 +  \wp ^{b+c}\beta \gamma).
\end{equation} 
In the computations to follow, we shall utilize relations R1-R3 from \eqref{WittGroupRelations}. 
\begin{itemize}
\item[1.] Case of $a$, $b$, $c$ even integers. In this case clearly $\partial _\wp (\varphi(P(p,q,r))) =0$. 
%%%
%%%
\item[2.] Case of $a$ odd and $b$, $c$ even. In this case we obtain 
$$ \partial _\wp (\varphi(P(p,q,r))) = \langle \alpha \rangle \oplus \langle \alpha \beta \gamma\rangle. $$
\begin{itemize}
%%%
%%%
\item[(i)] Subcase of $b\ge c$ and $2s\ge b+c$. Equation \eqref{CaseTwoEquation} becomes 
$$\wp ^{a+c}\alpha (\wp^{b-c}\beta +\gamma)  = -\wp^{b+c} ( \wp ^{2s-b-c} \sigma ^2 + \beta \gamma).$$ 
Comparison of powers of $\wp$ on either side leads to a contradiction unless either $\wp^{b-c}\beta +\gamma \equiv 0 \,(\text{mod } \wp)$ or $ \wp ^{2s-b-c} \sigma ^2 + \beta \gamma  \equiv 0 \,(\text{mod } \wp) $ or both. The first of these congruences can only occur if $b=c$ in which case it becomes $\beta +\gamma \equiv 0 \,(\text{mod } \wp)$, while the second is only viable if $2s=b+c$ transforming it into $\sigma ^2 + \beta \gamma  \equiv 0 \,(\text{mod } \wp)$. The former case leads to  
$$\langle \alpha \rangle \oplus \langle \alpha \beta \gamma\rangle = \langle \alpha \rangle \oplus \langle -\alpha \, \beta^2 \rangle = \langle \alpha \rangle \oplus \langle -\alpha \rangle = 0,$$
while the latter yields
$$\langle \alpha \rangle \oplus \langle \alpha \beta \gamma\rangle = \langle \alpha \rangle \oplus \langle -\alpha \, \sigma^2 \rangle = \langle \alpha \rangle \oplus \langle -\alpha \rangle = 0.$$
%
%%%
%%%
\item[(ii)] Subcase of $b\ge c$ and $2s\le b+c$. In this setup, equation \eqref{CaseTwoEquation} becomes 
$$\wp ^{a+c}\alpha (\wp^{b-c}\beta +\gamma)  = -\wp^{2s} ( \sigma ^2 + \wp^{b+c-2s}\beta \gamma).$$ 
As in the previous subcase, a comparison of powers of $\wp$ on the two sides leads quickly to either $\beta +\gamma \equiv 0 \,(\text{mod } \wp)$ or $\sigma ^2 + \beta \gamma  \equiv 0 \,(\text{mod } \wp) $, each of which implies the desired conclusion of $ \partial _\wp (\varphi(P(p,q,r))) =0$ exactly as in the previous subcase. 
%%%
%%%
\item[(iii)] Subcase of $c\ge b$. This case is treated in complete analogy to the case of $b\ge c$, as the difference in parity between $\beta$ and $\gamma$ played no role in our arguments.  
\end{itemize}
%%%
%%%
\item[3.] {\em Case of $a$, $b$ odd and $c$ even.} With these parity choices we obtain 
$$ \partial _\wp (\varphi(P(p,q,r))) = \langle \alpha \rangle \oplus  \langle \beta \rangle. $$
\begin{itemize}
\item[(i)] Subcase of $a\ge b$ and $2s\ge a+b$. In this situation equation \eqref{CaseTwoEquation} becomes 
$$ \wp ^{b+c}\gamma (\wp^{a-b}\alpha + \beta)  = -\wp^{a+b} (\wp^{2s-a-b} \sigma ^2 +  \alpha \beta ). $$
Since $b+c$ is odd and $a+b$ even, it follows that either $\wp^{a-b}\alpha + \beta \equiv 0 \,(\text{mod } \wp)$ or $\wp^{2s-a-b} \sigma ^2 +  \alpha \beta \equiv 0 \,(\text{mod } \wp)$ or both. The first relation is only possible if $a=b$ and $\alpha + \beta \equiv 0 \,(\text{mod } \wp)$ leading to 
$$\langle \alpha \rangle \oplus \langle \beta \rangle = \langle \alpha \rangle \oplus \langle -\alpha \rangle = 0. $$
The second relation is possible only if $2s=a+b$ and $\sigma ^2+ \alpha \beta \equiv 0 \,(\text{mod } \wp)$, leading to 
$$\langle \alpha \rangle \oplus \langle \beta \rangle = \langle \alpha \rangle \oplus \langle \alpha^2\beta  \rangle = \langle \alpha \rangle \oplus \langle - \alpha \sigma^2  \rangle =  \langle \alpha \rangle \oplus \langle -\alpha \rangle = 0. $$
%
%%%
%%%
\item[(ii)] {\em Subcase of $a\ge b$ and $2s\le a+b$.} Equation \eqref{CaseTwoEquation} now becomes 
$$ \wp ^{b+c}\gamma (\wp^{a-b}\alpha + \beta)  = -\wp^{2s} ( \sigma ^2 + \wp^{a+b-2s} \alpha \beta ) $$
and can only be realized if either $a=b$ and $\alpha + \beta \equiv 0 \,(\text{mod } \wp)$ or if $a+b=2s$ and $\sigma ^2+ \alpha \beta \equiv 0 \,(\text{mod } \wp)$. The conclusion $\varphi (P(p,q,r))=0$ follows as in the previous subcase. 
%%%
%%%
\item[(iii)] {\em Subcase of $a\le b$.} In the current case 3, all equations are symmetric in $a$ and $b$ and so this subcase follows from the previous two. 
\end{itemize}
%%%
%%%
We note that while the symmetry between the parameters $p$, $q$ and $r$ is broken by their different parity assumptions, these do not come into play in the above arguments. Thus case 3 also covers the situations where either only $a$ or only $b$ is even.
%%%
%%%
\item[4.] {\em Case of $a, b, c$ all odd.} In this case we obtain
$$ \partial _\wp (\varphi(P(p,q,r))) = \langle \alpha \rangle \oplus  \langle \beta \rangle \oplus \langle \gamma \rangle \oplus \langle \alpha \beta \gamma \rangle.$$
Relying on the complete symmetry of all equations under permutations of $a, b, c$, we may assume for concreteness that $a\ge b\ge c$. Thus equation \eqref{CaseTwoEquation} becomes 
\begin{equation} \label{CaseFourEquation} 
\wp^{a-c}\alpha\beta + \wp ^{a-b}\alpha \gamma + \beta \gamma = - \lambda ^2 \quad \text{ with } \quad \lambda =\frac{\wp^{s} \sigma}{\wp^{\frac{b+c}{2}}}.
\end{equation}
Notice that this equation leads to a contradiction unless $2s=b+c$. For if $2s>b+c$, a mod $\wp$ reduction gives the contradictory congruence $\beta \gamma \equiv 0\, (\text{mod } \wp)$, while $b+c>2s$ leads to the contradiction $\sigma ^2 \equiv 0\, (\text{mod }\wp)$. Thus $b+c=2s$ and so $\lambda = \sigma$. 
\begin{itemize}
\item[(i)] {\em Case of $a> b$.} In this case equation \eqref{CaseFourEquation} implies $\beta \gamma  \equiv -\sigma ^2 \, (\text{mod }\wp)$ showing that 
\begin{align*}  
\langle \alpha \rangle \oplus  \langle \beta \rangle \oplus \langle \gamma \rangle \oplus \langle \alpha \beta \gamma \rangle & =  \langle \alpha \rangle \oplus  \langle \beta \gamma^2 \rangle \oplus \langle \gamma \rangle \oplus \langle - \alpha \sigma ^2 \rangle \cr
& =  \langle \alpha \rangle \oplus  \langle -\sigma ^2 \gamma \rangle \oplus \langle \gamma \rangle \oplus \langle - \alpha  \rangle \cr
& =  \langle - \gamma \rangle \oplus \langle \gamma \rangle  \cr
& = 0.
\end{align*}
%%%
%%%
\item[(ii)] {\em Case of $a= b > c$.} Here equation \eqref{CaseFourEquation} reduces to $(\alpha +\beta)\gamma \equiv -\sigma ^2 \, (\text{mod } \wp)$ leading to 
\begin{align*}
\langle \alpha \rangle \oplus  \langle \beta \rangle \oplus \langle \gamma \rangle \oplus \langle \alpha \beta \gamma \rangle & =  \langle \alpha +\beta \rangle \oplus  \langle \alpha \beta (\alpha +\beta)  \rangle \oplus \langle \gamma \rangle \oplus \langle  \alpha \beta \gamma \rangle \cr
& =  \langle (\alpha +\beta)\gamma ^2 \rangle \oplus  \langle \alpha \beta \gamma ^2(\alpha +\beta)  \rangle \oplus \langle \gamma \rangle \oplus \langle  \alpha \beta \gamma \rangle \cr
& =  \langle -\sigma ^2\gamma \rangle \oplus  \langle -\sigma ^2\alpha \beta \gamma \rangle \oplus \langle \gamma \rangle \oplus \langle  \alpha \beta \gamma \rangle \cr
& =  \langle -\gamma \rangle \oplus  \langle -\alpha \beta \gamma \rangle \oplus \langle \gamma \rangle \oplus \langle  \alpha \beta \gamma \rangle \cr
& = 0.
\end{align*}
%%%
%%%
\item[(iii)] {\em Case of $a=b=c$.} Here we are led to the equation $(\alpha +\beta )\gamma = - (\alpha \beta +  \sigma ^2)$. A repeated use of this in the next lines gives the claimed result:
\begin{align*}
\langle \alpha \rangle \oplus  \langle \beta \rangle \oplus \langle \gamma \rangle \oplus \langle \alpha \beta \gamma \rangle & =  \langle \alpha +\beta \rangle \oplus  \langle \alpha \beta (\alpha +\beta)  \rangle \oplus \langle \gamma \rangle \oplus \langle  \alpha \beta \gamma \rangle \cr
& =  \langle (\alpha +\beta)\gamma ^2 \rangle  \oplus \langle  \alpha \beta \gamma \rangle \oplus  \langle \alpha \beta(\alpha +\beta)  \rangle \oplus \langle \gamma \rangle \cr
& =  \langle -\alpha \beta \gamma -\sigma ^2\gamma \rangle \oplus \langle  \alpha \beta \gamma \rangle \oplus  \langle \alpha \beta(\alpha +\beta)  \rangle \oplus \langle \gamma \rangle  \cr
& =  \langle -\sigma ^2 \gamma \rangle \oplus  \langle \sigma ^2 \alpha \beta \gamma ^3(\alpha \beta +\sigma^2) \rangle \oplus  \langle \alpha \beta(\alpha +\beta)  \rangle \oplus \langle \gamma \rangle \cr
& = \langle - \gamma \rangle \oplus  \langle  -\alpha \beta \gamma^2(\alpha  +\beta) \rangle \oplus  \langle \alpha \beta(\alpha +\beta)  \rangle \oplus \langle \gamma \rangle \cr
& =   \langle  -\alpha \beta (\alpha  +\beta) \rangle \oplus  \langle \alpha \beta(\alpha +\beta)  \rangle  \cr
& = 0.
\end{align*}
\end{itemize}
\end{itemize}
%%%
%%%
The above calculations show that Theorem \ref{main} applies to the symmetrized linking form of every pretzel knot $P(p,q,r)$ with $pq+pr+qr=-m^2$ with $m\in \mathbb N$ odd. Accordingly, the signature of any such knot is a multiple of $8$. On the other hand, the signature can be computed as \cite{jabuka1} 
$$\sigma (P(p,q,r)) = -(p+q)  +Sign(p)+Sign(q) -Sign(pq(p+q))+Sign((p+q)(pq+pr+qr)).$$
Since $pq+pr+qr=-m^2$, the above simplifies to 
$$\sigma (P(p,q,r)) = -(p+q)  +Sign(p)+Sign(q) -Sign(pq(p+q))-Sign(p+q).$$
Regardless the signs of $p$ and $q$, the above equation always reduces to  $\sigma (P(p,q,r)) = -(p+q)$, verifying the claim from Proposition \ref{DiophantineTheorem}. 
%
%%%%%%%%%%%%%%%%%%%%%%%%%%%%%%%%%%%%%%%%%%%%%%%%%%%%%%%%%%%%%%%%%%%%%%%%%%%%%%%%%%%%%%%%%%%%%%%%%%%%%%%%%%%%%%%%%%%%%%%%%%%%%%%%%%%%%%%%%%%%%%%%%%%%%%%%%%%%%%%%%%%%%%%%
\bibliography{Bibliography.bib}{}
\bibliographystyle{plain}

\end{document}